\documentclass[11pt]{article}
\usepackage{geometry,amsmath,amssymb,amsthm,mathtools,booktabs,array,bbm,titlesec,enumitem,xcolor,nicematrix}
\usepackage[colorlinks=true,linkcolor=blue!50!black,citecolor=blue!50!black,urlcolor=blue!50!black]{hyperref}

\newtheorem{theorem}{Theorem}[section]
\newtheorem*{theoremintro}{Theorem}
\newtheorem*{corollaryintro}{Corollary}
\newtheorem{lemma}[theorem]{Lemma}
\newtheorem{proposition}[theorem]{Proposition}
\newtheorem{corollary}[theorem]{Corollary}
\theoremstyle{definition}

\newtheorem{note}[theorem]{Note}

\newcommand{\C}{\mathcal{C}}

\newcommand{\Q}{\mathbb{Q}}

\newcommand{\KK}{\mathbb{K}}
\newcommand{\ff}{\mathfrak f}

\titleformat{\section}[runin]
  {\normalfont\large\bfseries}{\thesection.}{1em}{}
\titlespacing{\section}{0pt}{1.5ex plus .1ex minus .2ex}{1em}

\title{The dimension block of a spherical fusion category}
\author{Andrew Schopieray}
\date{}

\begin{document}

\maketitle

\begin{abstract}
Let $\mathcal{C}$ be a spherical fusion category with global dimension $D$. The Galois conjugates of the dimension character index a block of the $S$-matrix of the Drinfeld center, which we call the dimension block. We prove that for every character $\chi$ of $\mathrm{Gal}(\mathbb{Q}(D)/\mathbb{Q})$, the twisted sum $\sum_\sigma\chi(\sigma)/\sigma(D)$ has absolute value at most $e^{-1}\mathfrak{f}(\chi)^{-1/2}$, where $\mathfrak{f}(\chi)$ is the conductor of $\chi$ and $e$ is the order of the dimensional grading group of $\mathcal{C}$. As an application, we determine the possible global dimensions of spherical fusion categories below $\sqrt{5}$, extending the known classification up to $4\sqrt{3}/5$ of V.\ Ostrik and P.\ Etingof.
\end{abstract}

\vspace{5 mm}

\section{Introduction}\label{sec:intro}

Let $\mathcal{C}$ be a spherical fusion category over the complex numbers in the sense of \cite{MR2183279}. Its global dimension $D:=\dim(\mathcal{C})$ is a totally positive cyclotomic integer \cite{MR2183279,NScong} and an algebraic $d$-number \cite{MR2576705}. It is an open question whether the set
\begin{equation}
  X_s:=\{\mathrm{dim}(\mathcal{C}):\mathcal{C}\text{ a spherical fusion category}\}\subset[1,\infty)
\end{equation}
is discrete \cite{MR2183279,ostrikremarks}. V.\ Ostrik proved that $1$ is an isolated point of $X_s$ \cite[Theorem 4.1.1]{ostrikremarks}, and with P.\ Etingof showed by a computer search that $\tfrac{1}{2}(5-\sqrt{5})$, the global dimension of the Yang--Lee category $\mathcal{YL}$, is the only point of $X_s$ in $(1,\tfrac{4}{5}\sqrt{3})$ \cite[Proposition A.1.1]{ostrikremarks}. He asked whether $\sqrt{2}$ is a limit point of $X_s$, and whether $X_s$ meets the interval $(\tfrac{1}{2}(5-\sqrt{5}),\sqrt2\bigr)$ at all \cite[Questions 1.2.2--1.2.3]{ostrikremarks}. He suggested that the next point of $X_s$ after $\tfrac{1}{2}(5-\sqrt{5})$ is the smallest root $\alpha_3\approx1.84117$ of $t^3-14t^2+49t-49$, the dimension of a Galois conjugate of the rank-three category $\mathcal{C}(\mathfrak{sl}_2,5)_{\mathrm{ad}}$ \cite[Section 4.3]{MR3427429}.

\par Our approach is through the Drinfeld center. The Galois conjugates $\sigma(\dim)$ of the dimension character of $\mathcal{C}$ correspond to simple summands $A_\sigma$ of $I(\mathbbm{1})$, where $I:\mathcal{C}\to\mathcal{Z}(\mathcal{C})$ is induction, and have formal codegrees $\sigma(D)$ \cite{MR3427429}. The block of the normalized $S$-matrix of $\mathcal{Z}(\mathcal{C})$ indexed by these summands, the \emph{dimension block}, is a group matrix for $G_1=\mathrm{Gal}(\mathbb{K}_1/\mathbb{Q})$, where $\mathbb{K}_1$ is the field generated by the dimensions of simple objects of $\mathcal{C}$ (Corollary~\ref{cor:block}). Its eigenvalues are therefore governed by, for $\chi\in\widehat{G_1}$, the character sums
\begin{equation}
\hat{g}_1(\chi):=\sum_{\sigma\in G_1}\chi(\sigma)/\sigma(D).
\end{equation}
Characters of $G_1$ are even primitive Dirichlet characters; write $\mathfrak{f}(\chi)$ for the conductor. The sum $\hat{g}_1(\chi)$ vanishes unless $\chi$ factors through $\mathrm{Gal}(\mathbb{K}_0/\mathbb{Q})$, where $\mathbb{K}_0:=\mathbb{Q}(D)$, and then $\hat{g}_1(\chi)=e\,\hat{g}_{\mathbb{K}_0}(\chi)$, where $\hat{g}_{\KK_0}$ is the same sum over $\mathrm{Gal}(\mathbb{K}_0/\mathbb{Q})$ and $e:=[\mathbb{K}_1:\mathbb{K}_0]$ is the order of the dimensional grading group of $\mathcal{C}$ \cite{MR4836055}.

\begin{theoremintro}[Theorem~\ref{thm:orbit}]
Let $\mathcal{C}$ be a spherical fusion category. Then $|\hat{g}_1(\chi)|^2\leq1/\mathfrak{f}(\chi)$ for every $\chi\in\widehat{G_1}$. Equivalently, $|\hat{g}_{\mathbb{K}_0}(\chi)|^2\leq1/(e^2\mathfrak{f}(\chi))$ for every character $\chi$ of $\mathrm{Gal}(\mathbb{K}_0/\mathbb{Q})$.
\end{theoremintro}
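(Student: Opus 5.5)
The plan is to read the inequality off the unitarity of the $S$-matrix of $\mathcal{Z}(\mathcal{C})$, using the block structure of Corollary~\ref{cor:block}.

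\textbf{Setting up the block.} Write $\tilde S$ for the unitary normalized $S$-matrix of $\mathcal{Z}(\mathcal{C})$, and let $B$ be the $|G_1|\times|G_1|$ dimension block indexed by the summands $A_\sigma$ of $I(\mathbbm{1})$. By Corollary~\ref{cor:block}, $B$ is a group matrix for $G_1$. I will use its explicit form: the entry is $B_{\sigma,\tau}=\phi(\sigma^{-1}\tau)$, up to the appropriate convention, where
\[
\phi(\rho)=\frac{\dim(A_\rho)}{\dim(\mathcal{Z}(\mathcal{C}))^{1/2}}\cdot(\text{normalized pairing}).
\]
This should reduce to $\phi(\rho)=1/\rho(D)$, possibly times a unit of modulus one that is independent of $\rho$. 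The reason is that $\dim A_\sigma = D/\sigma(D)$ and $\tilde S_{\mathbbm{1},A_\sigma}=\dim(A_\sigma)/D$. The eigenvectors of a group matrix of an abelian group are the characters. So the eigenvalues of $B$ are exactly the sums $\hat g_1(\chi)$ for $\chi\in\widehat{G_1}$, with eigenvector $v_\chi=(\chi(\sigma))_\sigma/\sqrt{|G_1|}$.

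\textbf{Bounding by unitarity.} A submatrix of a unitary matrix has operator norm at most $1$, which only gives $|\hat g_1(\chi)|\le1$. To reach $1/\mathfrak{f}(\chi)$ I need more room, and I will get it by enlarging the block along a Galois orbit. The key input is that $\mathrm{Gal}(\overline{\mathbb{Q}}/\mathbb{Q})$ acts on the simple objects of $\mathcal{Z}(\mathcal{C})$ through $\mathrm{Gal}(\mathbb{Q}(\zeta_N)/\mathbb{Q})$, where $N$ is the Frobenius--Schur exponent. This action satisfies $\sigma(\tilde S_{ij})=\epsilon_\sigma(i)\tilde S_{\hat\sigma(i),j}$.

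Extend $v_\chi$ to a vector $w$ on the larger index set $\{A_\sigma\}$, using all $\sigma\in\mathrm{Gal}(\mathbb{Q}(\zeta_N)/\mathbb{Q})$ weighted by $\chi$ lifted to $(\mathbb{Z}/N)^\times$. Applying the row of $\tilde S$ at $\mathbbm{1}$ (or its Galois translates) then produces Gauss-sum-like expressions. Concretely, I would compute $\|\tilde S w\|^2=\|w\|^2$ and isolate the contribution of the rows indexed by the dimension summands. Those rows give $|\hat g_1(\chi)|^2$ times a multiplicity. The intended multiplicity is the number of lifts of $\chi$ that are compatible with a fixed conductor, roughly $\varphi(N)/\varphi(\mathfrak{f})$ against $N/\mathfrak{f}$.

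Alternatively, and I suspect more cleanly, the bound should come from a \emph{Gauss sum}. Express $1/\sigma(D)$ through the entries $\tilde S_{\mathbbm{1},A_\sigma}$, and use the fact that the Galois action on $\tilde S$ is realized by the permutation-with-signs matrices $G_\sigma=\tilde S^{-1}\sigma(\tilde S)$. Then $\sum_\sigma\chi(\sigma)G_\sigma$ is $\tilde S$-conjugate to a matrix whose norm is controlled by $\|\sum_{a\in(\mathbb{Z}/\mathfrak{f})^\times}\chi(a)\zeta_{\mathfrak{f}}^{a}\|/\varphi$-type quantities. The Gauss sum has absolute value $\sqrt{\mathfrak{f}(\chi)}$, which is the source of the $\mathfrak{f}^{-1/2}$.

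\textbf{Main obstacle and the equivalent form.} The main obstacle is this step: producing $\mathfrak{f}(\chi)$ rather than merely $1$. My first attempt will be to average the unitarity identity $\sum_j|\tilde S_{A_\sigma,j}|^2=1$ over the orbit while twisting by $\chi$. I will then show that the twisted column vector splits into $\mathfrak{f}(\chi)$ orthogonal pieces of equal norm, coming from the primitivity of $\chi$ mod $\mathfrak{f}$. The equivalent formulation for $\mathbb{K}_0$ follows immediately from $\hat g_1(\chi)=e\,\hat g_{\mathbb{K}_0}(\chi)$ when $\chi$ factors through $\mathrm{Gal}(\mathbb{K}_0/\mathbb{Q})$, and from vanishing otherwise.
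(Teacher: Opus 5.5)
Your proposal has a genuine gap. Nothing in it uses the twist, and the ingredients you do use, namely unitarity of $s$ and the signed-permutation Galois symmetry, only yield the trivial bound $|\hat g_1(\chi)|\le 1$. Both of your concrete mechanisms collapse.

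\emph{The lifted vector.} The objects $A_\sigma$ depend only on $\sigma|_{\mathbb{K}_1}$, and the action of $\mathrm{Gal}(\mathbb{Q}(\zeta_N)/\mathbb{Q})$ on them factors through $G_1$. So your vector $w$ is just $\frac{\varphi(N)}{|G_1|}w_\chi$. Then $\|sw\|=\|w\|$, restricted to the dimension-block rows, gives exactly $|G_1|\,|\hat g_1(\chi)|^2\le |G_1|$ again. The $\mathbbm{1}$-row of $s$ contains dimensions, not Gauss sums.

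\emph{The Gauss-sum idea.} The map $b\mapsto G_{\sigma_b}$ is a representation of the abelian group $(\mathbb{Z}/N)^\times$; it is the image of the diagonal torus $h_b$. Hence $\sum_b\chi(b)G_{\sigma_b}$ is $\varphi(N)$ times the orthogonal projection onto a torus eigenspace. Its norm is $\varphi(N)$, conjugating by the unitary $s$ does not change that, and no Gauss sum appears.

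\emph{The splitting claim.} Your claim that $s w_\chi$ ``splits into $\mathfrak{f}(\chi)$ orthogonal pieces of equal norm'' has no mechanism behind it. Primitivity of $\chi$ alone says nothing about where the mass of $sw_\chi$ goes.

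\emph{A minor point.} $s_{A_\sigma,A_\tau}=(\sigma\tau)(1/D)$ depends on $\sigma\tau$, so $s_{\mathcal{Y}}v_\chi=\hat g_1(\chi)v_{\bar\chi}$. Thus $v_\chi$ is not an eigenvector for non-real $\chi$. This is harmless for norm bounds.

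The missing idea is that every $A_\sigma$ has trivial twist. Consequently $w_\chi$ is fixed by $T=\rho(\mathfrak{t})$ and is a $\chi$-eigenvector of the torus $\rho(h_b)=\sigma_b(s)s^{-1}$. By the congruence property, $\rho$ factors through $\mathrm{SL}_2(\mathbb{Z}/N)$. Then $|G_1|\,\hat g_1(\chi)=\langle w_{\bar\chi},P\rho(\mathfrak{s})Pw_\chi\rangle$, where $P$ projects onto the $T$-fixed vectors. The task becomes bounding by $\mathfrak{f}^{-1/2}$ the compression of $\rho(\mathfrak{s})$ to the $T$-fixed vectors on which the torus acts by $\chi$.

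The paper proves this bound in three steps:
\begin{enumerate}
\item It reduces to prime powers $\ell^a$ via the Chinese remainder theorem.
\item By Frobenius reciprocity, it embeds each irreducible of $\mathrm{SL}_2(\mathbb{Z}/\ell^a)$ with nonzero $U$-fixed vectors into $\mathrm{Ind}_U^{\Gamma}1=\mathbb{C}[\Omega]$.
\item It computes directly. The $\alpha$-eigenfunction $F_\alpha$ on the big cell is sent to a function supported on the small cells with $q\in\ell^cR$, because the mean of $\alpha$ over $1+\ell^kR$ vanishes for $k<c$. This is exactly where the conductor enters, and it gives the norm ratio $\ell^{-c}$. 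The reverse direction follows by adjointness.
\end{enumerate}

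In terms of your unitarity picture: at least a fraction $1-1/\mathfrak{f}$ of $\|sw_\chi\|^2$ must lie on simple objects with nontrivial twist. Only the relation $sTs=T^{-1}sT^{-1}$, together with the level-$N$ structure, can force it there. Your reduction of the $\mathbb{K}_0$ form to the $G_1$ form is fine.
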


The bound is sharp: equality holds for every nontrivial character when $D$ is any of the numbers $\alpha_3$, $\alpha_5$, $D_m$ below (Lemma~\ref{lem:dimtwo-values}). Already the simplest consequence is useful: Corollary~\ref{cor:YLmin} shows by hand that $\mathrm{dim}(\mathcal{C})\geq\tfrac{1}{2}(5-\sqrt5)$ unless $\mathcal{C}\simeq\mathrm{Vec}$, a fact previously established by a computer search \cite[Proposition A.1.1]{ostrikremarks}.

The proof uses only the congruence property of the modular representation of $\mathcal{Z}(\mathcal{C})$ \cite{NScong} and its compatibility with the Galois action \cite{dong2015congruence}; representations of $\mathrm{SL}_2(\mathbb{Z}/N)$ have been used in a similar spirit to classify modular data \cite{MR4630478}. The vectors $w_\chi=\sum_\sigma\chi(\sigma)\mathbf{e}_{A_\sigma}$ are fixed by the twist and are eigenvectors of the diagonal torus of $\mathrm{SL}_2(\mathbb{Z}/N)$, and $\hat{g}_1(\chi)$ is a matrix coefficient of $\rho(\mathfrak{s})$ between two such vectors. The bound then reduces to a computation in the representations of $\mathrm{SL}_2(\mathbb{Z}/\ell^a)$ induced from the unipotent subgroup (Lemma~\ref{lem:local-pp}).

\par Theorem~\ref{thm:orbit} forces the Galois conjugates of $D$ to spread out as the degree of $\mathbb{K}_0$ grows (Corollary~\ref{cor:conjugates}), so only fields of small degree and small conductor can occur for small $D$. This complements \cite{MR3427429,ostrikremarks,Schopieray2021norm}, where the norms and traces of Galois conjugates of formal codegrees were used to constrain fusion categories. Let $F_n$ and $L_n$ denote the Fibonacci and Lucas numbers, $\phi=\tfrac{1}{2}(1+\sqrt5)$ the golden ratio, and for $m\geq1$ let
\begin{equation}
  D_m:=\sqrt{5}\,(1-\phi^{-2m}),
\end{equation}
the smaller root of $t^2-5F_{2m}t+5(L_{2m}-2)$. The sequence $D_m$ increases to $\sqrt{5}$, and $D_1=\tfrac{1}{2}(5-\sqrt{5})$, $D_2=D_1^2$, $D_3=20-8\sqrt{5}$. Let $\alpha_5\approx2.15585$ be the smallest root of $t^3-42t^2+245t-343$.

\begin{theoremintro}[Theorem~\ref{thm:dimtwo}]
Let $\mathcal{C}$ be a spherical fusion category with $\mathrm{dim}(\mathcal{C})<\sqrt5$. Then
\begin{equation}
      \mathrm{dim}(\mathcal{C})\in\{1,\ 2,\ \alpha_3,\ \alpha_5\}\cup\{D_m:m\geq1\}.
\end{equation}
The seven smallest of these numbers, $1<D_1<\alpha_3<D_2<2<D_3<\alpha_5$, are the global dimensions of spherical fusion categories.
\end{theoremintro}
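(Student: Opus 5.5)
The plan is to combine Theorem~\ref{thm:orbit} with the elementary fact that every Galois conjugate $\sigma(D)$ is itself the global dimension of the Galois conjugate category $\mathcal{C}^\sigma$. Hence $\sigma(D)=\sum_X|\sigma(\dim X)|^2\geq1$ for all $\sigma$, with equality only for $\mathrm{Vec}$. Write $n=[\mathbb{K}_0:\mathbb{Q}]$.

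\textbf{Step 1: the trivial character.} For $\chi=1$ the theorem gives $\sum_\sigma 1/\sigma(D)\leq 1/e$. Every term is positive, so $1/D\leq1/e$ and $e\leq D<\sqrt5$, whence $e\in\{1,2\}$.
\begin{itemize}
\item If $e=2$, all other conjugates contribute at most $1/2-1/D<1/2-1/\sqrt5$. This forces either $n=1$, so $D$ is a rational integer in $[2,\sqrt5)$, i.e.\ $D=2$, or a field in which some conjugate is very large. The latter case I expect to be killed by the nontrivial characters as in Step 2.
\item If $e=1$, we proceed to Step 2.
\end{itemize}

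\textbf{Step 2: bounding the degree and conductor.} By Parseval on $G_0=\mathrm{Gal}(\mathbb{K}_0/\mathbb{Q})$,
\begin{equation*}
n\sum_\sigma \sigma(D)^{-2}=\sum_\chi|\hat g_{\mathbb{K}_0}(\chi)|^2\leq e^{-2}\sum_{\chi}\mathfrak f(\chi)^{-1}.
\end{equation*}
The left side exceeds $n/5$, since the term $\sigma=\mathrm{id}$ alone contributes more than $1/5$. The right side grows very slowly in $n$: the characters are distinct even primitive Dirichlet characters, and their conductors are at least $5,7,8,12,13,\dots$, apart from the trivial one. This should bound $n$ by a small number and confine $\mathbb{K}_0$ to a short explicit list of real abelian fields of small conductor, which is the content of Corollary~\ref{cor:conjugates}.

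\textbf{Step 3: treating each field.} For each surviving field I would run a finite search for totally positive cyclotomic integers $D<\sqrt5$ with all conjugates $\geq1$ satisfying the bounds of Theorem~\ref{thm:orbit} for every $\chi$. Where needed I would add the $d$-number condition and the requirement that $D/\sigma(D)$ be an algebraic integer.
\begin{itemize}
\item For $\mathbb{Q}(\sqrt2)$, $\mathbb{Q}(\sqrt3)$, $\mathbb{Q}(\sqrt{13})$, etc., the nontrivial-character bound $|1/D-1/D'|\leq 1/\sqrt{\mathfrak f}$, together with $1/D+1/D'\leq1$, should leave no integral points.
\item For the cubic field $\mathbb{Q}(\zeta_7)^+$ it should leave exactly $\alpha_3$ and $\alpha_5$.
\end{itemize}

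\textbf{Main obstacle: the field $\mathbb{Q}(\sqrt5)$.} Here the constraints do not cut the search down to finitely many points, and one gets the infinite family $D_m$. Writing $D\in\mathbb{Z}[\phi]$ with $D<\sqrt5$ and conjugate $D'$, the two inequalities say $1/D+1/D'\leq1$ and $|1/D-1/D'|\leq1/\sqrt5$. Since $1/D>1/\sqrt5$, these give $1/D'\geq 1/D-1/\sqrt5>0$ and also an upper bound on $1/D'$ near $1-1/D$. I would rewrite the problem in terms of $u=D/\sqrt5$, which satisfies $u<1$ with $u$ close to $1$, and use the divisibility of $D$ by $\sqrt5$-type factors coming from the $d$-number or codegree conditions. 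This should reduce the problem to showing $1-u$ is a unit, necessarily a power $\phi^{-2m}$, which yields $D=D_m$. Getting this reduction exactly right, without leaving stray non-unit solutions, is where I expect the real work to be.

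\textbf{Realization.} Finally, the seven smallest values must be realized by actual categories:
\begin{itemize}
\item $1$ by $\mathrm{Vec}$;
\item $D_1$ by $\mathcal{YL}$;
\item $D_2$ by $\mathcal{YL}\boxtimes\mathcal{YL}$;
\item $2$ by $\mathrm{Vec}_{\mathbb{Z}/2}$;
\item $\alpha_3$ by the Galois conjugate of $\mathcal{C}(\mathfrak{sl}_2,5)_{\mathrm{ad}}$;
\item $D_3$ and $\alpha_5$ by suitable Galois conjugates of other known categories, for instance an appropriate conjugate of an $\mathfrak{sl}_2$ or $\mathfrak{so}$ level-$k$ adjoint subcategory with dimension field $\mathbb{Q}(\zeta_7)^+$.
\end{itemize}
For each of these one checks the global dimension directly.
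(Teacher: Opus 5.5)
Your outline has the same architecture as the paper's proof: Theorem~\ref{thm:orbit}, then a restriction on $e$ and $\mathbb{K}_0$, then an enumeration with Ostrik's $d$-number test, a Pell equation over $\mathbb{Q}(\sqrt5)$, and explicit realizations. However, the realization step has a real gap. You never name categories of global dimension $D_3$ or $\alpha_5$, and your suggested source cannot produce $D_3=20-8\sqrt5$, which lies in $\mathbb{Q}(\sqrt5)$ rather than $\mathbb{Q}(\zeta_7)^+$. The $\mathfrak{sl}_2$ option also fails for $\alpha_5$. The only $\mathcal{C}(\mathfrak{sl}_2,k)_{\mathrm{ad}}$ with dimension field $\mathbb{Q}(\zeta_7)^+$ have $k=5$, which gives $\alpha_3$, and $k=12$, whose global dimension has conjugates $\approx70.69$, $9.00$, $4.31$. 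The paper realizes $D_3$ by the Galois conjugate of the even part of the 2D2 subfactor ($\eta\otimes\eta\cong\mathbbm{1}\oplus2\eta\oplus2\alpha\eta$, $\dim\eta=2+\sqrt5$, global dimension $20+8\sqrt5$). It realizes $\alpha_5$ by a conjugate of $\mathcal{C}(\mathfrak{sl}_3,4)_{\mathrm{ad}}$, of global dimension $\approx35.342$, a root of $t^3-42t^2+245t-343$. Your $\mathfrak{so}$ guess can be salvaged via $\mathfrak{so}_6\cong\mathfrak{sl}_4$ at level $3$, which by level-rank duality has the same adjoint global dimension. But the category has to be named and checked.

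The classification half is also only sketched, exactly where the work lies. Your inequality $n/5<J(\mathbb{K}_0)$ is vacuous for $n\le5$ and never bounds a conductor. Even the sharper \eqref{eq:dimtwo-J} bounds the conductor only for cubic fields. Infinitely many quadratic and quartic fields survive, so ``a finite search for each surviving field'' is not a finite procedure. The paper excludes $e=2$ and $d\ge5$ by the case analysis of Lemma~\ref{lem:dimtwo-fields} and obtains a \emph{uniform} bound on $J(\mathbb{K}_0)-1$. It feeds this into \eqref{eq:spread} to bound the largest conjugate by $10.7$, $92.8$ or $34.7$, and then enumerates integer minimal polynomials rather than fields.

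The $d$-number test is indispensable there, contrary to your expectation that the two character inequalities leave no integral points in other quadratic fields. The number $D=5-2\sqrt2$ (minimal polynomial $t^2-10t+17$) has $D<\sqrt5$, $1/D+1/D'=\frac{10}{17}$ and $1/D-1/D'=\frac{4\sqrt2}{17}<\frac{1}{\sqrt8}$. It is excluded only because $17\nmid100$.

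Finally, the $\mathbb{Q}(\sqrt5)$ case you flag as the main obstacle closes in a few lines with the same test. Write $D=\frac12(u-k\sqrt5)$ with $k\ge1$ and $n=DD'$. The character bound gives $n\ge5k$, and $D<\sqrt5$ gives $n<5k+5$. The condition $n\mid u^2=4n+5k^2$ gives $n\mid5k^2$ with $k-1<5k^2/n\le k$, so $n=5k$ and $(k+2)^2-5(u/5)^2=4$. In your formulation, $N(1-D/\sqrt5)=(5k+5-n)/5$, so ``$1-D/\sqrt5$ is a unit'' is exactly the statement $n=5k$.
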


Whether $D_m\in X_s$ for $m\geq4$ remains open; the first open case is $D_4\approx2.18847$. Table~\ref{tab:dimtwo} lists the known realizations.

\begin{corollaryintro}[Corollary~\ref{cor:ostrik}]
For every $c<\sqrt{5}$ the set $X_s\cap[1,c]$ is finite. In particular $\sqrt{2}$ is not a limit point of $X_s$, the interval $(\tfrac{1}{2}(5-\sqrt{5}),\sqrt{2})$ contains no point of $X_s$, and the smallest point of $X_s$ larger than $\tfrac{1}{2}(5-\sqrt{5})$ is $\alpha_3$.
\end{corollaryintro}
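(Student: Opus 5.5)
The corollary should follow quickly from Theorem~\ref{thm:dimtwo} together with the known realizations listed there. No new representation-theoretic input is needed.

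\textbf{Finiteness.} Fix $c<\sqrt5$. By Theorem~\ref{thm:dimtwo}, $X_s\cap[1,c]$ is contained in
\[
\{1,2,\alpha_3,\alpha_5\}\cup\{D_m:m\geq1\}.
\]
Since $D_m=\sqrt5(1-\phi^{-2m})$ increases strictly to $\sqrt5$, there is $m_0$ with $D_m>c$ for all $m>m_0$. Hence only $D_1,\dots,D_{m_0}$ can lie in $[1,c]$, and $X_s\cap[1,c]$ is finite. In particular the set has no accumulation point in $[1,\sqrt5)$. Taking $c$ with $\sqrt2<c<\sqrt5$ shows that $\sqrt2$ is not a limit point of $X_s$.

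\textbf{The gap after $D_1$.} Next I would order the candidates numerically:
\[
D_1=\tfrac12(5-\sqrt5)\approx1.38197,\qquad \alpha_3\approx1.84117,\qquad D_2=D_1^2\approx1.90983,
\]
\[
2,\qquad D_3=20-8\sqrt5\approx2.11146,\qquad \alpha_5\approx2.15585,
\]
and $D_m>D_3$ for $m\geq4$ by monotonicity. Two consequences follow. First, no candidate lies in $(D_1,\sqrt2)$, because $\sqrt2\approx1.41421<\alpha_3$ and every other candidate exceeds $D_1$ only if it also exceeds $\alpha_3$. Second, the smallest candidate above $D_1$ is $\alpha_3$.

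\textbf{Realizing $\alpha_3$.} To conclude that $\alpha_3$ is the next point of $X_s$, I still need $\alpha_3\in X_s$. This is the second part of Theorem~\ref{thm:dimtwo}, realized by a Galois conjugate of $\mathcal{C}(\mathfrak{sl}_2,5)_{\mathrm{ad}}$ \cite[Section 4.3]{MR3427429}. The same part of Theorem~\ref{thm:dimtwo} gives $D_1\in X_s$ via $\mathcal{YL}$, so the statement about the smallest point above $D_1$ makes sense.

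\textbf{Main obstacle.} The only delicate step is the numerical placement of the cubic roots $\alpha_3$ and $\alpha_5$ relative to the quadratic irrationals $D_1$, $D_2$, $D_3$ and $\sqrt2$. I would certify this by sign changes. For $p(t)=t^3-14t^2+49t-49$, one has $p(1.8)<0<p(1.9)$, and I would also check that $p$ has no root below $1.8$. With these checks the corollary is immediate.
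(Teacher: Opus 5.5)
Your proposal is correct and follows the paper's proof: Theorem~\ref{thm:dimtwo} confines $X_s\cap[1,c]$ to finitely many candidates because $D_m$ increases to $\sqrt5$, and the remaining claims come from the ordering $D_1<\sqrt2<\alpha_3<D_2<\cdots$ together with $\alpha_3\in X_s$. Your sign-change check is a reasonable way to certify that ordering, which the paper simply states numerically; note also that points of $X_s$ at or above $\sqrt5$, which Theorem~\ref{thm:dimtwo} does not cover, trivially exceed $\alpha_3$.
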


We also note that since Theorem \ref{thm:dimtwo} has bounded global dimensions for spherical fusion categories away from $2$, by passing to the sphericalization of $\mathcal{C}$ \cite[Remark 3.1]{MR2183279} we may conclude that the global dimensions of arbitrary nontrivial fusion categories over the complex numbers are bounded away from $1$.

\begin{corollaryintro}[Corollary~\ref{arbcor}]
    If $\mathcal{C}\not\simeq\mathrm{Vec}$ is a fusion category, then $\dim(\mathcal{C})\geq\tfrac{1}{2}D_3=10-4\sqrt{5}$.
\end{corollaryintro}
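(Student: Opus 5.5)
We deduce the statement from Theorem~\ref{thm:dimtwo} via the sphericalization construction \cite[Remark 3.1]{MR2183279}. Let $\mathcal{C}\not\simeq\mathrm{Vec}$ be a fusion category over $\mathbb{C}$. Its sphericalization $\widetilde{\mathcal{C}}$ is a spherical fusion category containing $\mathcal{C}$ as a fusion subcategory (equivalently, it is a $\mathbb{Z}/2$-graded extension of $\mathcal{C}$) with $\dim(\widetilde{\mathcal{C}})=2\dim(\mathcal{C})$. Here $\dim(\mathcal{C})$ is the global dimension $\sum_X|X|^2$ in the sense of \cite{MR2183279}, which is a positive real number at least $1$. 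Since $\mathcal{C}\not\simeq\mathrm{Vec}$, neither is $\widetilde{\mathcal{C}}$. The whole proof therefore reduces to proving
\[
\dim(\widetilde{\mathcal{C}})\ge D_3 .
\]

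Suppose, to the contrary, that $\dim(\widetilde{\mathcal{C}})<D_3=20-8\sqrt5\approx2.1115<\sqrt5$. Theorem~\ref{thm:dimtwo} then applies and gives
\[
\dim(\widetilde{\mathcal{C}})\in\{1,D_1,\alpha_3,D_2,2\}.
\]
The value $1$ is excluded because $\widetilde{\mathcal{C}}\not\simeq\mathrm{Vec}$. Each remaining value $d$ would give $\dim(\mathcal{C})=d/2$. For $d=D_1,\alpha_3,D_2$ we have $d<2$, so $\dim(\mathcal{C})<1$. This is impossible, because every nontrivial fusion category has $\dim(\mathcal{C})\ge 1$, with strict inequality unless $\mathcal{C}\simeq\mathrm{Vec}$ (all squared norms $|X|^2$ are at least $1$).

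The main point needing care is the value $d=2$. In that case $\dim(\mathcal{C})=1$, which forces $\mathcal{C}\simeq\mathrm{Vec}$: since $|X|^2\ge1$ for every simple $X$, global dimension $1$ allows only the unit object. This contradicts our hypothesis. Hence $\dim(\widetilde{\mathcal{C}})\ge D_3$, and therefore $\dim(\mathcal{C})\ge\tfrac12 D_3=10-4\sqrt5$.

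The only genuine input to check is that the sphericalization really doubles the global dimension and is spherical. Both facts are standard and are stated in \cite[Remark 3.1]{MR2183279}, so I would simply cite them.
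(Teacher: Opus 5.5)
Your route is the paper's: sphericalize, use $\dim(\widetilde{\mathcal{C}})=2\dim(\mathcal{C})$, and apply Theorem~\ref{thm:dimtwo}. However, the justification you give at the decisive step is false: it is not true that $|X|^2\geq1$ for every simple $X$. In the Yang--Lee category the nontrivial simple object has $\dim(X)=\tfrac12(1-\sqrt5)$, so $|X|^2=\dim(X)^2=\tfrac12(3-\sqrt5)\approx0.382$. That is precisely why $\dim(\mathcal{YL})=D_1<2$ even though $\mathcal{YL}$ has two simple objects. A lower bound of $1$ holds for Frobenius--Perron dimensions, not for the squared norms $|X|^2$. So, as written, your exclusion of $d\in\{D_1,\alpha_3,D_2\}$ and your treatment of $d=2$ both rest on an incorrect claim.

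The repair is the fact the paper actually uses. We have $|\mathbbm{1}|^2=1$ and $|X|^2>0$ for every simple $X$ \cite[Theorem 2.3]{MR2183279}. Hence $\dim(\mathcal{C})=1+\sum_{X\not\cong\mathbbm{1}}|X|^2>1$ whenever $\mathcal{C}\not\simeq\mathrm{Vec}$. This gives $\dim(\widetilde{\mathcal{C}})>2$ at once, and your case analysis collapses to the paper's observation that Theorem~\ref{thm:dimtwo} leaves no point of $X_s$ in $(2,D_3)$.

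Separately, drop the parenthetical description of $\widetilde{\mathcal{C}}$. Its objects are pairs $(X,f)$ with $f\colon X\to X^{\ast\ast}$ an isomorphism satisfying a compatibility condition. $\mathcal{C}$ is the target of a surjective forgetful functor from $\widetilde{\mathcal{C}}$, not a fusion subcategory of it. A fusion subcategory of a spherical category is itself spherical, so if $\mathcal{C}$ always embedded in $\widetilde{\mathcal{C}}$ the construction would be pointless. Your argument only uses that $\widetilde{\mathcal{C}}$ is spherical and that the dimension doubles, so this misstatement does no damage, but it should not be asserted.
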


\par The organization of the paper is as follows. Section~\ref{sec:prelim} sets up the dimension block. Section~\ref{sec:rep} recalls the modular representation of $\mathcal{Z}(\mathcal{C})$ and locates the vectors $w_\chi$ with the desired eigenvalues. Section~\ref{sec:local} contains the local computation for $\mathrm{SL}_2(\mathbb{Z}/\ell^a)$, and Section~\ref{sec:thm} proves Theorem~\ref{thm:orbit} and its consequences for the conjugates of $D$. Section~\ref{sec:example} works through the proof for a Galois conjugate of the even part of the 2D2 subfactor, whose global dimension is $D_3=20-8\sqrt5$. Section~\ref{app:dimtwo} proves Theorem~\ref{thm:dimtwo} and Corollary~\ref{cor:ostrik}.

\section*{Acknowledgements }We thank V.\ Ostrik for comments on an earlier draft leading to Corollary~\ref{arbcor}. The author acknowledges the use of Claude (Anthropic) in the development of this manuscript and
in particular developing the code for the proof of Lemma \ref{lem:dimtwo-values}. This assistance included suggestions for intermediate claims, proof steps, and corrections. All mathematical statements, proofs, computational results, and references were independently checked by the author, who takes sole responsibility for the contents of the paper.

\section{The dimension block}\label{sec:prelim}

\par Let $\mathcal{C}$ be a spherical fusion category over $\mathbb{C}$ with global dimension $D:=\dim(\mathcal{C})$ and Grothendieck ring $K(\mathcal{C})$, and let $N:=\mathrm{FSexp}(\mathcal{C})$, which is the order of the twist matrix of $\mathcal{Z}(\mathcal{C})$ \cite[Theorem 5.5]{NSexp}. Every fusion category is defined over a number field \cite{MR2183279}; so for $\tau\in\mathrm{Gal}(\overline{\mathbb{Q}}/\mathbb{Q})$, applying $\tau$ to the structure constants of $\mathcal{C}$ gives a spherical fusion category $\mathcal{C}^\tau$, the \emph{Galois conjugate}, with dimension function $\tau(\dim)$ and global dimension $\tau(D)$. Let $F:\mathcal{Z}(\mathcal{C})\to\C$ be the forgetful functor, $I$ its adjoint, and $S$ the unnormalized $S$-matrix of $\mathcal{Z}(\mathcal{C})$. We use the following facts from \cite[Section 2]{MR3427429}. The irreducible representations $E$ of $K(\mathcal{C})\otimes_\mathbb{Z}\mathbb{C}$ have formal codegrees $f_E>0$ with $\sum_E\dim(E)/f_E=1$ \cite[Proposition 2.10]{MR3427429}. There is an injection $E\mapsto A_E$ of $\mathrm{Irr}(K(\mathcal{C}))$ into $\mathcal{O}(\mathcal{Z}(\mathcal{C}))$ whose image is the set of simple summands of $I(\mathbbm{1})$, with $[I(\mathbbm{1}):A_E]=\dim(E)$, $\mathrm{dim} A_E=D/f_E$ and $\theta_{A_E}=1$ \cite[Theorems 2.5 and 2.13]{MR3427429}. Finally, $A_E$ is the simple object whose character $[B]\mapsto S_{A_E,B}/\mathrm{dim} A_E$ of $K(\mathcal{Z}(\mathcal{C}))$ is $\psi_E\circ[F]$ \cite[Example 2.9 and Section 2.3]{MR3427429}; that is,
\begin{equation}\label{eq:ostrik-S}
S_{A_E,Y}=\dim(A_E)\,\psi_E([F(Y)])\quad\text{ for all }\quad Y\in\mathcal{O}(\mathcal{Z}(\mathcal{C})).
\end{equation}
As in the author's work with T.\ Gannon \cite{MR4836055}, consider the data
\begin{equation}
\mathbb{K}_1:=\mathbb{Q}(\mathrm{dim}(X):X\in\mathcal{O}(\mathcal{C})),\qquad G_1:=\mathrm{Gal}(\mathbb{K}_1/\Q),\quad\text{ and }\quad \mathbb{K}_0:=\mathbb{Q}(D)\subseteq \mathbb{K}_1.  
\end{equation}
Put $e:=[\mathbb{K}_1:\mathbb{K}_0]$, the order of the dimensional grading group of $\mathcal{C}$, a power of $2$ \cite{MR4836055}, and $d:=[\mathbb{K}_0:\mathbb{Q}]$. The dimensions of simple objects of $\mathcal{C}$ are real algebraic integers, and so are their conjugates, which are dimensions in the categories $\mathcal{C}^\tau$. They lie in $\mathbb{Q}(\zeta_N)$, since $\dim(X)=\dim(I(X))/D$ and the normalized $S$-matrix of $\mathcal{Z}(\mathcal{C})$, which determines all dimensions in $\mathcal{Z}(\mathcal{C})$ and $D$, has entries in $\mathbb{Q}(\zeta_N)$ \cite[Theorem 7.1]{NScong}. So $\mathbb{K}_1$ is a real abelian field. The characters $\sigma(\dim)$ for $\sigma\in G_1$ are pairwise distinct and we call them the \emph{dimension orbit}. Their formal codegrees are the numbers $\sigma(D)$, each value taken $e$ times. For $\sigma\in G_1$ let $A_\sigma$ be the summand of $I(\mathbbm{1})$ associated to $\sigma(\mathrm{dim})$. These summands are pairwise non-isomorphic, occur with multiplicity one in $I(\mathbbm{1})$, and have dimension $D/\sigma(D)$. We focus on the part of the $S$-matrix indexed by these summands, which we call the \emph{dimension block}. Set
\begin{equation}
  m_1:=\sum_{\sigma\in G_1}\sigma(1/D),\qquad\text{ and }\qquad I_s^{(1)}(\C):=\sum_{\sigma\in G_1}\sigma(1/D)^2.  
\end{equation}
Then $m_1\leq1$, since $m_1$ collects some of the terms of $\sum_E\dim(E)/f_E=1$.

For $m\in(\mathbb{Z}/N)^\times$ let $\sigma_m\in\mathrm{Gal}(\mathbb{Q}(\zeta_N)/\mathbb{Q})$ be given by $\zeta_N\mapsto\zeta_N^m$. Since $\mathbb{K}_1\subseteq\mathbb{Q}(\zeta_N)$, the map $m\mapsto\bar\sigma_m:=\sigma_m|_{\mathbb{K}_1}$ is a surjection $(\mathbb{Z}/N)^\times\to G_1$, so each $\chi\in\widehat{G_1}$ defines a Dirichlet character $\chi(m):=\chi(\bar\sigma_m)$ modulo $N$. We identify $\chi$ with the unique primitive character inducing it; its modulus $\mathfrak{f}(\chi)\mid N$ is the conductor of $\chi$, and it depends only on $\chi$, being the conductor of the fixed field of $\mathrm{ker}\chi$. Since $\sigma_{-1}$ is complex conjugation and $\mathbb{K}_1$ is real, $\chi(-1)=1$, so these characters are even. Define for all $\chi\in\widehat{G_1}$,
\begin{equation}
\hat{g}_1(\chi)=\sum_{\sigma\in G_1}\chi(\sigma)/\sigma(D),\qquad\text{and}\qquad\hat{g}_{\mathbb{K}_0}(\chi)=\sum_{\sigma\in\mathrm{Gal}(\mathbb{K}_0/\mathbb{Q})}\chi(\sigma)/\sigma(D).  
\end{equation}
Then $\hat{g}_1(1)=m_1$ and $\hat{g}_1(\bar\chi)=\overline{\hat{g}_1(\chi)}$. Since $\sigma(D)$ depends only on $\sigma|_{\KK_0}$, we have $\hat{g}_1(\chi)=0$ unless $\chi$ is trivial on $\mathrm{Gal}(\KK_1/\KK_0)$, and in that case $\hat{g}_1(\chi)=e\,\hat{g}_{\KK_0}(\chi)$. Let $s:=\tfrac{S}{D}$ be the normalized $S$-matrix of $\mathcal{Z}(\mathcal{C})$ and $T$ its twist matrix. The following is a consequence of~\eqref{eq:ostrik-S}.

\begin{lemma}\label{lem:rows}
For $\sigma\in G_1$ and $Y\in\mathcal{O}(\mathcal{Z}(\mathcal{C}))$ we have $s_{A_\sigma,Y}=\sigma(\dim(Y)/D)$. In particular $s_{A_\sigma,A_\tau}=(\sigma\tau)(1/D)$, and the row for each $A_\sigma$ is real.
\end{lemma}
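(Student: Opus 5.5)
Everything follows from \eqref{eq:ostrik-S} applied to the character $E=\sigma(\dim)$, whose formal codegree is $\sigma(D)$. By the facts recalled above, $\dim A_\sigma = D/\sigma(D)$, and $\psi_E([F(Y)])=\sigma(\dim(F(Y)))$. So we get
\begin{equation*}
s_{A_\sigma,Y}=\frac{S_{A_\sigma,Y}}{D}=\frac{1}{\sigma(D)}\,\sigma\bigl(\dim F(Y)\bigr).
\end{equation*}

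It remains to identify $\dim F(Y)$ with $\dim(Y)$ and to justify applying $\sigma$. The forgetful functor $F:\mathcal{Z}(\mathcal{C})\to\mathcal{C}$ preserves the (spherical) dimension, so $\dim F(Y)=\dim(Y)$. This number lies in $\mathbb{K}_1$, because $F(Y)$ is a nonnegative integer combination of simple objects of $\mathcal{C}$. Also $D\in\mathbb{K}_0\subseteq\mathbb{K}_1$. Hence $\sigma$ acts on both, and since it is a field automorphism,
\begin{equation*}
\frac{\sigma(\dim Y)}{\sigma(D)}=\sigma\bigl(\dim(Y)/D\bigr).
\end{equation*}
This gives the first claim. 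The only point needing care is this identification of $\psi_E\circ[F]$ as the character $\sigma\circ\dim\circ F$. It is exactly how $A_\sigma$ was defined, namely as the summand of $I(\mathbbm{1})$ attached to the one-dimensional representation $\sigma(\dim)$ of $K(\mathcal{C})\otimes\mathbb{C}$.

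For the second claim, take $Y=A_\tau$, whose dimension is $D/\tau(D)=\tau(\tau^{-1}(D)/D)$. Rather than push this through directly, it is cleaner to use the symmetry $s_{A_\sigma,A_\tau}=s_{A_\tau,A_\sigma}$ of the $S$-matrix together with $G_1$ abelian. From the first claim,
\begin{equation*}
s_{A_\sigma,A_\tau}=\frac{1}{\sigma(D)}\,\sigma\!\left(\frac{D}{\tau(D)}\right)=\frac{\sigma(D)}{\sigma(D)\,\sigma\tau(D)}=(\sigma\tau)(1/D),
\end{equation*}
where $\sigma(\tau(D))=(\sigma\tau)(D)$ since $\tau(D)\in\mathbb{K}_1$ and $\mathbb{K}_1/\mathbb{Q}$ is Galois. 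This expression is symmetric in $\sigma,\tau$ because $G_1$ is abelian, which is consistent with the symmetry of the $S$-matrix.

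Finally, reality of the row follows because $\sigma(\dim(Y)/D)\in\mathbb{K}_1$, and $\mathbb{K}_1$ is a real abelian field, as noted above.
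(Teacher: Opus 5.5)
Your proof is correct and is essentially the paper's argument: apply \eqref{eq:ostrik-S} with $\psi_E=\sigma(\dim)$, use $\dim F(Y)=\dim(Y)\in\mathbb{K}_1$ and $\dim(A_\tau)=D/\tau(D)$, and deduce reality from $\mathbb{K}_1$ being real. The appeal you announce to the symmetry of $s$ and the commutativity of $G_1$ is never actually used, since your direct computation already gives $(\sigma\tau)(1/D)$, so that sentence can be dropped.
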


\begin{proof}
Since $\mathrm{dim}(Y)=\mathrm{dim}(F(Y))\in \mathbb{K}_1$, \eqref{eq:ostrik-S} with $\psi_E=\sigma(\mathrm{dim})$ gives
\begin{equation}
  S_{A_\sigma,Y}=\frac{D}{\sigma(D)}\sigma(\mathrm{dim}(F(Y)))=\frac{D}{\sigma(D)}\sigma(\mathrm{dim}(Y)).
\end{equation}
Divide by $D$. Since $\mathrm{dim}(A_\tau)/D=\tau(1/D)$, the second claim follows, and the rows are real because $\mathbb{K}_1$ is real.
\end{proof}

Set $\mathcal{Y}:=\{A_\sigma\}_{\sigma\in G_1}$, the set of simple objects indexing the dimension block.

\begin{corollary}\label{cor:block}
The dimension block $s_\mathcal{Y}=(s_{A_\sigma,A_\tau})_{\sigma,\tau}$ satisfies $s_\mathcal{Y} v_\chi=\hat{g}_1(\chi)\,v_{\bar\chi}$, where $v_\chi=(\chi(\tau))_\tau$. Hence its eigenvalues are $m_1$, the numbers $\hat{g}_1(\chi)$ for real $\chi\neq1$, and $\pm|\hat{g}_1(\chi)|$ on $\mathrm{span}(v_\chi,v_{\bar\chi})$ for non-real $\chi$, and
\begin{equation}\label{parsley}
\sum_{\chi\in\widehat{G_1}}|\hat{g}_1(\chi)|^2=|G_1|\,I_s^{(1)}(\mathcal{C}).
\end{equation}
\end{corollary}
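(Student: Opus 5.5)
The argument is a direct computation with Lemma~\ref{lem:rows}, followed by linear algebra on the characters of the finite abelian group $G_1$.

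\textbf{The identity $s_\mathcal{Y}v_\chi=\hat g_1(\chi)v_{\bar\chi}$.} By Lemma~\ref{lem:rows}, $s_{A_\sigma,A_\tau}=(\sigma\tau)(1/D)$, so the block is a group matrix for $G_1$. Then
\begin{equation*}
(s_\mathcal{Y}v_\chi)_\sigma=\sum_{\tau}(\sigma\tau)(1/D)\chi(\tau).
\end{equation*}
Substituting $\rho=\sigma\tau$ and using $\chi(\tau)=\chi(\rho)\bar\chi(\sigma)$ gives
\begin{equation*}
(s_\mathcal{Y}v_\chi)_\sigma=\bar\chi(\sigma)\sum_\rho\chi(\rho)\rho(1/D)=\hat g_1(\chi)\,(v_{\bar\chi})_\sigma.
\end{equation*}

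\textbf{Eigenvalues.} The $v_\chi$ form a basis of $\mathbb{C}^{G_1}$ by character orthogonality, so the block decomposes along the subspaces $\mathrm{span}(v_\chi,v_{\bar\chi})$.
\begin{itemize}
\item For $\chi=1$, $v_1$ is an eigenvector with eigenvalue $\hat g_1(1)=m_1$.
\item For real $\chi\neq1$, $\bar\chi=\chi$, so $v_\chi$ is an eigenvector with eigenvalue $\hat g_1(\chi)$. This value is real, since $\hat g_1(\bar\chi)=\overline{\hat g_1(\chi)}$.
\item For non-real $\chi$, $v_\chi$ and $v_{\bar\chi}$ are linearly independent. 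On the basis $(v_\chi,v_{\bar\chi})$ the matrix is
\begin{equation*}
\begin{pmatrix}0&\overline{\hat g_1(\chi)}\\ \hat g_1(\chi)&0\end{pmatrix},
\end{equation*}
using $s_\mathcal{Y}v_{\bar\chi}=\hat g_1(\bar\chi)v_\chi=\overline{\hat g_1(\chi)}\,v_\chi$. Its characteristic polynomial is $t^2-|\hat g_1(\chi)|^2$, so the eigenvalues are $\pm|\hat g_1(\chi)|$.
\end{itemize}

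\textbf{The Parseval identity \eqref{parsley}.} This is the Plancherel formula for the function $f(\rho)=\rho(1/D)$ on $G_1$. Expanding and using orthogonality,
\begin{equation*}
\sum_\chi|\hat g_1(\chi)|^2=\sum_{\rho,\rho'}f(\rho)\overline{f(\rho')}\sum_\chi\chi(\rho)\bar\chi(\rho')=|G_1|\sum_\rho|f(\rho)|^2.
\end{equation*}
Since $\mathbb{K}_1$ is real, $f$ is real-valued, and the right-hand side equals $|G_1|\,I_s^{(1)}(\mathcal{C})$.

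The only point requiring care is the non-real case: one must use $\hat g_1(\bar\chi)=\overline{\hat g_1(\chi)}$ and the independence of $v_\chi$ and $v_{\bar\chi}$. Everything else is routine.
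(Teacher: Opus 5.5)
Your proposal is correct and follows the paper's proof. You use the same reindexing computation from Lemma~\ref{lem:rows} to get $s_\mathcal{Y}v_\chi=\hat g_1(\chi)v_{\bar\chi}$, and the same Parseval identity on $G_1$ for \eqref{parsley}. You also write out the $2\times2$ block on $\mathrm{span}(v_\chi,v_{\bar\chi})$, which the paper leaves implicit.
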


\begin{proof}
We compute $(s_\mathcal{Y} v_\chi)(\sigma)=\sum_\tau(\sigma\tau)(1/D)\chi(\tau)=\overline{\chi(\sigma)}\,\hat{g}_1(\chi)$. Parseval's formula applied to the function $\sigma\mapsto\sigma(1/D)$ on $G_1$ gives \eqref{parsley}.
\end{proof}

\section{The representation}\label{sec:rep}

Since $\mathcal{C}$ is spherical, $\mathcal{Z}(\mathcal{C})$ is a modular category whose first Gauss sums both equal $D$ \cite[Theorem 1.2]{MR1966525}, and the modular relations read $(ST)^3=DS^2$ \cite[(1-3) and Section 1.4]{dong2015congruence}. For the normalized modular data this gives $s^2=C$, the duality matrix, and $(sT)^3=s^2$, hence
\begin{equation}\label{eq:sTs}
  sTs=T^{-1}sT^{-1}.
\end{equation}
Let $V=\mathbb C^{\mathcal{O}(\mathcal{Z}(\mathcal{C}))}$ with its standard inner product and recall:
\begin{enumerate}[label=\textup{(R\arabic*)}]
\item The assignments $\rho(\mathfrak{s})=s$ and $\rho(\mathfrak{t})=T$ define a representation of $\mathrm{SL}_2(\mathbb{Z})=\langle\mathfrak{s},\mathfrak{t}\mid\mathfrak{s}^4=1,(\mathfrak{s}\mathfrak{t})^3=\mathfrak{s}^2\rangle$, the canonical modular representation of $\mathcal{Z}(\mathcal{C})$ \cite[(1-10)]{dong2015congruence}. The relations hold by~\eqref{eq:sTs} and $s^2=C$; since both Gauss sums equal $D$, no rescaling of $T$ is needed.
\item $\mathrm{ker}\rho$ is a congruence subgroup of level $N$, so $\rho$ factors through $\mathrm{SL}_2(\mathbb{Z}/N)$, and the entries of $s$ and $T$ lie in $\mathbb{Q}(\zeta_N)$ \cite[Theorems 6.7 and 7.1]{NScong}.
\item For $b\in(\mathbb{Z}/N)^\times$, the element $\mathfrak{h}_b=\mathfrak{t}^b\mathfrak{s}\mathfrak{t}^{b^{-1}}\mathfrak{s}\mathfrak{t}^b\mathfrak{s}^{-1}$ is congruent to $h_b:=\mathrm{diag}(b,b^{-1})$ modulo $N$, and $\rho(\mathfrak{h}_b)=\sigma_b(s)s^{-1}$, where $\sigma_b$ is applied entrywise; this matrix is a signed permutation matrix \cite[Lemma 4.2 and (4-4)]{dong2015congruence}.
\end{enumerate}
Since $\sigma_{-1}$ is complex conjugation on $\mathbb{Q}(\zeta_N)$, taking $b=-1$ in (R3) gives $\bar{s}=\rho(-1)s=Cs$. As $s$ and $C$ are symmetric, $ss^\ast=s\,sC=C^2=1$, so $s$ is unitary and $\rho$ is a unitary representation (see also \cite{BK}).

Write $U:=\langle\mathfrak{t}\rangle$. The diagonal matrices $h_b$, $b\in(\mathbb{Z}/N)^\times$, form a subgroup of $\mathrm{SL}_2(\mathbb{Z}/N)$ isomorphic to $(\mathbb{Z}/N)^\times$, which we call the \emph{torus}, following the terminology for algebraic groups. Since $h_b\mathfrak{t} h_b^{-1}=\mathfrak{t}^{b^2}$, the torus normalizes $U$, so it preserves the space $V^U$ of twist-fixed vectors, and $V^U$ splits into eigenspaces $V^U_\alpha$ on which $\rho(h_b)$ acts by $\alpha(b)$, one for each character $\alpha$ of $(\mathbb{Z}/N)^\times$. For $\chi\in\widehat{G_1}$ define
\begin{equation}
  w_\chi:=\sum_{\sigma\in G_1}\chi(\sigma)\,\mathbf e_{A_\sigma}\in V.
\end{equation}

\begin{lemma}\label{lem:lagvec}
The vector $w_\chi$ has the following properties.
\begin{enumerate}[label=\textup{(\alph*)}]
\item $Tw_\chi=w_\chi$.
\item $\rho(h_b)w_\chi=\chi(b)\,w_\chi$ for every $b\in(\mathbb{Z}/N)^\times$.
\item $\langle w_{\bar\chi},s\,w_\chi\rangle=|G_1|\,\hat{g}_1(\chi)$.
\end{enumerate}
\end{lemma}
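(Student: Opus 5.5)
The three properties come from three earlier facts: the twist of every summand of $I(\mathbbm{1})$ is trivial, the Galois description of $\rho(h_b)$ in (R3) combined with Lemma~\ref{lem:rows}, and Corollary~\ref{cor:block}. I take the standard inner product to be conjugate-linear in the first slot.

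For (a), I would use that each $A_\sigma$ is a simple summand of $I(\mathbbm{1})$, so $\theta_{A_\sigma}=1$. Since $T$ is diagonal with entries $\theta_Y$, this gives $T\mathbf e_{A_\sigma}=\mathbf e_{A_\sigma}$, and (a) follows by linearity.

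For (b), which is the only step with content, I would compute how $\rho(h_b)$ permutes the basis vectors $\mathbf e_{A_\tau}$.
\begin{itemize}
\item By (R3), $\rho(h_b)s=\sigma_b(s)$.
\item Take the $A_\tau$-row of both sides. By Lemma~\ref{lem:rows}, the $A_\tau$-row of $\sigma_b(s)$ has entries
\[
\sigma_b\bigl(\tau(\dim(Y)/D)\bigr)=(\bar\sigma_b\tau)(\dim(Y)/D)=s_{A_{\bar\sigma_b\tau},Y},
\]
because $\dim(Y)/D\in\mathbb{K}_1$.
\item So $\mathbf e_{A_\tau}^{T}\rho(h_b)s=\mathbf e_{A_{\bar\sigma_b\tau}}^{T}s$. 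Since $s$ is invertible, $\mathbf e_{A_\tau}^{T}\rho(h_b)=\mathbf e_{A_{\bar\sigma_b\tau}}^{T}$.
\item $\rho(h_b)$ is a real signed permutation matrix, hence orthogonal. Transposing therefore gives $\rho(h_b)^{-1}\mathbf e_{A_\tau}=\mathbf e_{A_{\bar\sigma_b\tau}}$.
\item Because $b\mapsto h_b$ is a homomorphism from $(\mathbb{Z}/N)^\times$, $\rho(h_b)^{-1}=\rho(h_{b^{-1}})$. Replacing $b$ by $b^{-1}$ yields $\rho(h_b)\mathbf e_{A_\tau}=\mathbf e_{A_{\bar\sigma_b^{-1}\tau}}$.
\end{itemize}
Reindexing with $\tau'=\bar\sigma_b^{-1}\tau$ then gives
\[
\rho(h_b)w_\chi=\sum_{\tau'}\chi(\bar\sigma_b\tau')\,\mathbf e_{A_{\tau'}}=\chi(b)\,w_\chi .
\]
In particular no sign appears, and the exponent direction comes out consistently. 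I expect the main care to be needed in these last two points: the bookkeeping between rows and columns, and whether $b$ or $b^{-1}$ appears.

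For (c), I would note that $s$ maps $\mathrm{span}\{\mathbf e_{A_\sigma}\}$ into $V$, but only the $\mathcal Y$-coordinates enter the pairing with $w_{\bar\chi}$. Those coordinates are $s_\mathcal{Y}v_\chi=\hat g_1(\chi)\,v_{\bar\chi}$ by Corollary~\ref{cor:block}. Hence
\[
\langle w_{\bar\chi},s\,w_\chi\rangle=\sum_\sigma\overline{\bar\chi(\sigma)}\,\bar\chi(\sigma)\,\hat g_1(\chi)=\sum_\sigma|\chi(\sigma)|^2\,\hat g_1(\chi)=|G_1|\,\hat g_1(\chi).
\]
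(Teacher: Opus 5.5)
Your proposal is correct and follows the paper's proof: (a) from $\theta_{A_\sigma}=1$, (b) by reading off from (R3) and Lemma~\ref{lem:rows} that the $A_\tau$-row of $\rho(h_b)$ is $\mathbf e_{A_{\bar\sigma_b\tau}}^{T}$, and (c) from the computation behind Corollary~\ref{cor:block}. The only cosmetic difference is in (b). You pass from rows to columns via $\rho(h_b)^{-1}=\rho(h_b)^{T}$, whereas the paper uses that the unitary $\rho(h_b)$ preserves $\|w_\chi\|$, which forces the coordinates of $\rho(h_b)w_\chi$ outside $\mathcal{Y}$ to vanish. Your detour through $h_{b^{-1}}$ is also unnecessary: applying $\rho(h_b)$ to $\rho(h_b)^{-1}\mathbf e_{A_\tau}=\mathbf e_{A_{\bar\sigma_b\tau}}$ already gives its action on the $\mathbf e_{A_\tau}$.
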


\begin{proof}
For (a), every $A_\sigma$ has trivial twist. For (b), put $Q:=\rho(h_b)=\sigma_b(s)s^{-1}$, using (R3). By Lemma~\ref{lem:rows} the $A_\tau$-row of $\sigma_b(s)$ is the $A_{\bar\sigma_b\tau}$-row of $s$, so $(Qv)_{A_\tau}=v_{A_{\bar\sigma_b\tau}}$ for every $v$, and the $\mathcal{Y}$-coordinates of $Qw_\chi$ are those of $\chi(b)w_\chi$. Since $Q$ is unitary, $Qw_\chi$ has the same norm as $w_\chi$, so its other coordinates vanish and $Qw_\chi=\chi(b)w_\chi$. For (c) we compute $(sw_\chi)_{A_\kappa}=\sum_\tau(\kappa\tau)(1/D)\chi(\tau)=\overline{\chi(\kappa)}\,\hat{g}_1(\chi)$, and pair with $w_{\bar\chi}$.
\end{proof}

\section{The local computation}\label{sec:local}

Let $M=\ell^a$ be a prime power and $\Gamma_M=\mathrm{SL}_2(\mathbb{Z}/M)$. Let $U\subseteq\Gamma_M$ be generated by $\mathfrak{t}$, and let $\{h_b\}\cong(\mathbb{Z}/M)^\times$ be the torus, as in Section~\ref{sec:rep}. For a unitary representation $(\pi,W)$ of $\Gamma_M$ let $P$ be the orthogonal projection onto $W^{U}$, and for a character $\alpha$ of $(\mathbb{Z}/M)^\times$ put
\begin{equation}
  W^U_\alpha=\{w\in W^U:\pi(h_b)w=\alpha(b)w\ \text{for all}\ b\}.
\end{equation}
We call $P\pi(\mathfrak{s})P$, viewed as an operator on $W^U$, the \emph{compression} of $\pi(\mathfrak{s})$ to $W^U$; it is the block of $\pi(\mathfrak{s})$ indexed by $W^U$, and has norm at most $1$. For $w,w'\in W^U$ we have $\langle w',\pi(\mathfrak{s})w\rangle=\langle w',P\pi(\mathfrak{s})Pw\rangle$, so matrix coefficients between $U$-fixed vectors only see the compression. Because $h_b$ normalizes $U$ and $\mathfrak{s} h_b\mathfrak{s}^{-1}=h_{b^{-1}}$, the compression maps $W^U_\alpha$ into $W^U_{\bar\alpha}$. Let $\kappa_M(\alpha)$ be the supremum of $\|P\pi(\mathfrak{s})P|_{W^U_\alpha}\|$ over all unitary representations of $\Gamma_M$.

\begin{lemma}\label{lem:local-pp}
Let $\alpha$ be a nontrivial character of $(\mathbb{Z}/M)^\times$ of conductor $\ell^c$, where $1\le c\le a$. Then $\kappa_M(\alpha)=\ell^{-c/2}=\mathrm{cond}(\alpha)^{-1/2}$. For $\alpha=1$ we have $\kappa_M(1)=1$, attained by the trivial representation.
\end{lemma}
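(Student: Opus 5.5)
Every unitary representation of $\Gamma_M$ is a summand of multiples of the regular representation, and the compression to $U$-fixed vectors behaves well under direct sums. So the plan is to compute everything in one universal model, $W=\mathrm{Ind}_U^{\Gamma_M}\mathbb{1}=L^2(\Gamma_M/U)$. The point is that the $U$-fixed part of any $\pi$ is recovered by Frobenius reciprocity. Concretely, $\pi^U\cong\mathrm{Hom}_{\Gamma_M}(\mathrm{Ind}_U^{\Gamma_M}\mathbb{1},\pi)$, compatibly with the Hecke algebra $\mathcal{H}=\mathrm{End}(\mathrm{Ind}_U\mathbb{1})$ acting on the right. The compression $P\pi(\mathfrak{s})P$ is the action of the Hecke operator $\mathbb{1}_{U\mathfrak{s}U}$, normalized by $|U|^{-1}$, and the torus acts through Hecke operators $\mathbb{1}_{Uh_bU}$. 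Hence $\kappa_M(\alpha)$ equals the norm of this normalized Hecke operator restricted to the $\alpha$-eigenspace of $W^U$ in the single representation $W$, and it is attained there.

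Next I realize $\Gamma_M/U$ concretely. It is the set of primitive vectors $v\in(\mathbb{Z}/M)^2$, namely vectors not both divisible by $\ell$, and $U$ is the stabilizer of $(1,0)^T$. So $W^U$ consists of functions on primitive vectors that are invariant under the shear $(x,y)\mapsto(x+y,y)$. For $f\in W^U$, the operator $P\pi(\mathfrak{s})P$ becomes an average of $f(\mathfrak{s}u\cdot)$ over $u\in U$. The torus acts by $f(x,y)\mapsto f(bx,b^{-1}y)$; alternatively one can use $h_b$ acting by scaling after using $-1$ and the $U$-orbit description. I will decompose $W^U$ by $U$-orbits on primitive vectors. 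These orbits are indexed by the second coordinate $y$ together with $x\bmod \gcd(y,M)$. Equivalently, they are the double cosets $U\backslash\Gamma_M/U$, represented by $\mathfrak{t}^j$, by $\begin{pmatrix}b&0\\ c&b^{-1}\end{pmatrix}$-type elements, and so on. The $\alpha$-isotypic part then splits by the valuation $v_\ell(y)=k$. On the piece with $v_\ell(y)=0$, the torus-eigenfunctions are $f(x,y)=\alpha(y)^{\pm1}$, times a function of $x$ that is constant along $x\mapsto x+y$ and hence constant. Applying $\mathfrak{s}$ sends $(x,y)\mapsto(-y,x)$, and averaging over $U$ produces a sum $\frac1M\sum_{j}\alpha(x+jy)$-type expression, i.e.\ a Gauss/Jacobi-type character sum.

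The main step is to evaluate these sums. Averaging $\alpha$ over an arithmetic progression modulo $M$ gives a complete character sum, which vanishes unless the step is divisible by $M/\ell^c$. The surviving terms are Gauss sums of $\alpha$ with absolute value $\ell^{c/2}$, relative to a count of $\ell^c$. So the compression maps the $\alpha$-piece to the $\bar\alpha$-piece as a rank-one-per-level operator, with coefficient of modulus $\ell^{-c/2}$ times ratios of the orbit sizes. I expect the hard part to be the bookkeeping over the valuation levels $k$. One has to check that pieces with $k\ge1$ either are killed (because $\alpha$ has conductor $\ell^c$ and the relevant progressions are too short) or are mapped isometrically with the same factor, and that the resulting operator between levels is block-diagonal, or at least has norm exactly $\ell^{-c/2}$. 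I would first do the case $a=c=1$, where $\mathrm{Ind}_U\mathbb{1}$ decomposes explicitly into principal series of $\mathrm{SL}_2(\mathbb{F}_\ell)$. In that case the answer $\ell^{-1/2}$ is the classical Gauss sum normalization of the $\mathfrak{s}$-action on the principal series $\mathrm{Ind}_B\alpha$. I would then lift to general $a$ by writing $W^U_\alpha$ as a sum over levels and verifying that the compression has norm at most $\ell^{-c/2}$, with equality on the level-$0$ vector. For $\alpha=1$, the constant function is fixed by $\pi(\mathfrak{s})$, which gives $\kappa_M(1)=1$; the upper bound is $\|P\pi(\mathfrak{s})P\|\le1$.
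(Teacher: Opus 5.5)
\textbf{Verdict: genuine gap.} The upper bound on all of $W^U_\alpha$, which is the substance of the lemma, is left as ``bookkeeping'', and the mechanism you predict for it is not the right one.

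The parts you do carry out agree with the paper. These are the reduction via Frobenius reciprocity (your Hecke-algebra version) to $\mathrm{Ind}_U^{\Gamma_M}1=\mathbb{C}[\Omega]$ on primitive vectors, and the sorting of $U$-orbits by $v_\ell$ of the second coordinate. They also include the big-cell eigenvector $F_\alpha(p,q)=\alpha(q)[q\in R^\times]$, with $R=\mathbb{Z}/M$, which gives the lower bound.

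Where your expected mechanism goes wrong is the claim that Gauss sums appear. In this model $\mathfrak{s}$ permutes $\Omega$, so none do. Projecting $\mathfrak{s}F_\alpha=\alpha(-p)[p\in R^\times]$ onto $U$-invariants only averages $\alpha$ over cosets $x+\ell^kR$ with $x\in R^\times$. That average is $\alpha(x)$ if $k\ge c$ and $0$ if $k<c$. So the condition is $\ell^c\mid y$, not $M/\ell^c\mid y$; the latter is the support condition for $\sum_x\alpha(x)\zeta_M^{xy}$, which would enter only if $\mathfrak{s}$ acted by a Fourier transform. One gets $P\mathfrak{s}PF_\alpha=\alpha(-p)[q\in\ell^cR]$. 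The factor $\ell^{-c}$ is just the ratio of support sizes $|R^\times|\,|\ell^cR|/(M|R^\times|)$.

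The idea you are missing is the shape of the compression on $W^U_\alpha=\mathbb{C}F_\alpha\oplus\Sigma_\alpha$, where $\Sigma_\alpha$ collects all small-cell levels $k\ge1$ at once. It is not block-diagonal in the levels. It is purely off-diagonal between the big cell and the small cells.
\begin{itemize}
\item The big-to-big block vanishes because $\sum_{p\in R^\times}\alpha(p)=0$.
\item The small-to-small block vanishes because $\mathfrak{s}^{-1}(p,q)=(q,-p)$. Hence $\mathfrak{s}$ carries functions supported on $\{q\notin R^\times\}$ to functions supported on $\{q\in R^\times\}$.
\item So $\Sigma_\alpha$ is mapped into the line $\mathbb{C}F_{\bar\alpha}$. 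The norm of this map $B$ needs no level-by-level computation. Since $\mathfrak{s}^{-1}=h_{-1}\mathfrak{s}$ with $h_{-1}$ central, $(P\mathfrak{s}P)^\ast=h_{-1}P\mathfrak{s}P$. Thus $B^\ast$ is the big-to-small map for $\bar\alpha$, and $\|B\|=\ell^{-c/2}$.
\item This also explains why inputs at levels $1\le k<c$ are killed: they are orthogonal to $P\mathfrak{s}PF_{\bar\alpha}$, which is supported on $q\in\ell^cR$.
\item Finally, the images of $xF_\alpha$ and of $y\in\Sigma_\alpha$ lie in the orthogonal spaces $\Sigma_{\bar\alpha}$ and $\mathbb{C}F_{\bar\alpha}$. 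This gives $\|P\mathfrak{s}P(xF_\alpha+y)\|^2\le\ell^{-c}\|xF_\alpha+y\|^2$.
\end{itemize}
Without the vanishing of the small-to-small blocks and this orthogonality, bounds on individual level blocks do not by themselves bound the whole operator. The image of $F_\alpha$ is spread over every level $k\ge c$, including $q=0$, and every small level feeds back into the same line. With these facts, no separate treatment of $a=c=1$ via principal series of $\mathrm{SL}_2(\mathbb{F}_\ell)$ is needed.
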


\begin{proof}
Put $R:=\mathbb{Z}/M$ and $\Gamma:=\Gamma_M$. Let $\Omega\subset R^2$ be the set of primitive column vectors, those with a unit coordinate. The group $\Gamma$ acts transitively on $\Omega$, and $U$ is the stabilizer of $(1,0)^\mathrm{T}$. So $\mathbb{C}[\Omega]$, with the action $(g\cdot f)(v)=f(g^{-1}v)$ and the counting inner product, is $\mathrm{Ind}_U^\Gamma1$. A unitary representation is an orthogonal sum of irreducibles, and $P$, $\mathfrak{s}$ and $h_b$ preserve each summand. By Frobenius reciprocity, every irreducible representation with nonzero $U$-fixed vectors embeds isometrically, up to scaling, into $\mathbb C[\Omega]$. So it suffices to bound the compression on $\mathbb C[\Omega]^U_\alpha$. We now define some local terminology to make the proof clearer.

\noindent \emph{Cells.} Write $v:=(p,q)$. Since $u_x(p,q)=(p+xq,q)$, the $U$-fixed functions are the functions constant on $U$-orbits. There are two kinds of orbits:
\begin{itemize}
\item\emph{The big cell.} For $q\in R^\times$ the orbit is $O_q:=\{(p,q):p\in R\}$.
\item\emph{The small cells.} For $q\notin R^\times$, so that $p\in R^\times$, the orbit is $\{(p',q):p'\in p+qR\}$.
\end{itemize}
The torus acts by $(h_bf)(p,q)=f(b^{-1}p,bq)$ and preserves both kinds of cells. On $\mathrm{span}\{1_{O_q}\}$ it acts as the regular representation of $R^\times$. So the $\alpha$-eigenspace of the big cell is spanned by
\begin{equation}
  F_\alpha(p,q)=\alpha(q)\,[q\in R^\times],\qquad \|F_\alpha\|^2=M\,|R^\times|.
\end{equation}
Thus $\mathbb C[\Omega]^U_\alpha=\mathbb CF_\alpha\oplus\Sigma_\alpha$, where $\Sigma_\alpha$ consists of the functions supported on the small cells.

\noindent \emph{Big cell to small cells.} Since $\mathfrak{s}^{-1}(p,q)=(q,-p)$, we have $(\mathfrak{s} F_\alpha)(p,q)=\alpha(-p)[p\in R^\times]$. Now average over $U$-orbits:
\begin{itemize}
\item On $O_q$ the average is $M^{-1}\sum_{p\in R^\times}\alpha(-p)=0$.
\item On the small-cell orbit of $(p,q)$ with $qR=\ell^kR$, $k\ge1$, the average is $\alpha(-p)$ times the mean of $\alpha$ over $1+\ell^kR$. That mean is $1$ if $k\ge c$ and $0$ otherwise.
\end{itemize}
Hence
\begin{equation}
  (P\mathfrak{s}PF_\alpha)(p,q)=\alpha(-p)\,[q\in\ell^cR],\qquad \|P\mathfrak{s} PF_\alpha\|^2=|R^\times|\,|\ell^cR|=\ell^{-c}\|F_\alpha\|^2,
\end{equation}
and $P\mathfrak{s} PF_\alpha\in\Sigma_{\bar\alpha}$.

\noindent \emph{Small cells to big cell.} If $f$ is supported on $\{q\notin R^\times\}$, then $\mathfrak{s}f$ is supported on $\{q\in R^\times\}$. So $P\mathfrak{s}P$ maps $\Sigma_\alpha$ into the $\bar\alpha$-part of the big cell, which is $\mathbb{C}F_{\bar\alpha}$. Call this map $B$. Since $\mathfrak{s}^{-1}=h_{-1}\mathfrak{s}$ with $h_{-1}$ central, $(P\mathfrak{s} P)^\ast=h_{-1}P\mathfrak{s}P$. So $B^\ast$ is the restriction of $h_{-1}P\mathfrak{s} P$ to $\mathbb{C}F_{\bar\alpha}$, which has norm $\ell^{-c/2}$ by the previous step applied to $\bar\alpha$. Hence $\|B\|=\ell^{-c/2}$.

\noindent \emph{Conclusion.} Let $w=xF_\alpha+y$ with $y\in\Sigma_\alpha$. The images $xP\mathfrak{s} PF_\alpha\in\Sigma_{\bar\alpha}$ and $P\mathfrak{s} Py\in\mathbb{C}F_{\bar\alpha}$ are orthogonal, so
\begin{equation}
  \|P\mathfrak{s}Pw\|^2\le\ell^{-c}\bigl(|x|^2\|F_\alpha\|^2+\|y\|^2\bigr)=\ell^{-c}\|w\|^2,
\end{equation}
with equality for $w=F_\alpha$. The case $\alpha=1$ is clear, since $\|P\pi(\mathfrak{s})P\|\leq1$.
\end{proof}

\section{The theorem and its corollaries}\label{sec:thm}

\begin{theorem}\label{thm:orbit}
Let $\C$ be a spherical fusion category. Then for every $\chi\in\widehat{G_1}$,
\begin{equation}
|\hat{g}_1(\chi)|^2\leq\frac1{\mathfrak{f}(\chi)} .
\end{equation}
Equivalently, $|\hat{g}_{\mathbb{K}_0}(\chi)|^2 \leq 1/(e^2\mathfrak{f}(\chi))$ for every character $\chi$ of $\mathrm{Gal}(\mathbb{K}_0/\mathbb{Q})$.
\end{theorem}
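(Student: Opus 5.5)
We combine Lemma~\ref{lem:lagvec} with Lemma~\ref{lem:local-pp}, passing from level $N$ to prime-power levels via the Chinese remainder theorem. By Lemma~\ref{lem:lagvec}(a),(b), $w_\chi$ and $w_{\bar\chi}$ lie in $V^U$. More precisely, $w_\chi\in V^U_\chi$ and $w_{\bar\chi}\in V^U_{\bar\chi}$, where $\chi$ is viewed as a Dirichlet character mod $N$. Both vectors have norm $|G_1|^{1/2}$. By Lemma~\ref{lem:lagvec}(c),
\begin{equation*}
|G_1|\,|\hat g_1(\chi)|=|\langle w_{\bar\chi},Ps\,Pw_\chi\rangle|\le \|PsP|_{V^U_\chi}\|\,|G_1|.
\end{equation*}
It therefore suffices to show that the compression of $\rho(\mathfrak s)$ to $V^U_\chi$ has norm at most $\mathfrak f(\chi)^{-1/2}$.

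To get this bound, factor $N=\prod_\ell \ell^{a_\ell}$, so that $\mathrm{SL}_2(\mathbb Z/N)=\prod_\ell\Gamma_{\ell^{a_\ell}}$. Under this decomposition $\mathfrak s$, $\mathfrak t$ and the torus all split as products of their local components. In particular $U=\prod_\ell U_\ell$, since $\mathfrak t$ generates a cyclic group of order $N$ whose $\ell$-parts are the local $U_\ell$. Likewise $\chi=\prod_\ell\chi_\ell$ with $\mathfrak f(\chi)=\prod_\ell\mathrm{cond}(\chi_\ell)$.

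Decompose $\rho$ into irreducibles. Each irreducible is an outer tensor product $\bigotimes_\ell\pi_\ell$, and on it we have
\begin{equation*}
V^U_\chi=\bigotimes_\ell (W_\ell)^{U_\ell}_{\chi_\ell},\qquad P\rho(\mathfrak s)P=\bigotimes_\ell P_\ell\pi_\ell(\mathfrak s_\ell)P_\ell .
\end{equation*}
The norm of a tensor product of operators is the product of the norms. Each factor is bounded by $\kappa_{\ell^{a_\ell}}(\chi_\ell)$, which Lemma~\ref{lem:local-pp} gives as $\mathrm{cond}(\chi_\ell)^{-1/2}$, or $1$ when $\chi_\ell$ is trivial. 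The product of these bounds is $\mathfrak f(\chi)^{-1/2}$. The compression also respects the orthogonal decomposition into irreducibles, so the bound carries over to all of $V$ and gives $|\hat g_1(\chi)|^2\le 1/\mathfrak f(\chi)$.

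For the equivalent form, first note that $\hat g_1(\chi)=0$ unless $\chi$ is trivial on $\mathrm{Gal}(\KK_1/\KK_0)$. When $\chi$ is trivial there, $\hat g_1(\chi)=e\,\hat g_{\KK_0}(\chi)$. The conductor of $\chi$ is the same whether $\chi$ is regarded as a character of $\mathrm{Gal}(\KK_0/\Q)$ or of $G_1$, because it is determined by the fixed field of $\ker\chi$. Moreover, every character of $\mathrm{Gal}(\KK_0/\Q)$ inflates to a character of $G_1$. Hence $e^2|\hat g_{\KK_0}(\chi)|^2\le 1/\mathfrak f(\chi)$.

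The main obstacle is the local factorization step. One must check that $V^U_\chi$ really is the tensor product of the local eigenspaces. This holds because $U$-invariance under the product $\prod_\ell U_\ell$ means invariance under each factor, and the torus character factors as $\chi=\prod_\ell\chi_\ell$. One must also check that the compression of $\rho(\mathfrak s)$ factors correspondingly, since $P=\bigotimes_\ell P_\ell$. A secondary subtlety is the identification of the primitive conductor at each prime with the local conductor $\ell^c$ appearing in Lemma~\ref{lem:local-pp}.
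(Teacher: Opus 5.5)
Your proposal is correct and follows essentially the same route as the paper. Lemma~\ref{lem:lagvec} exhibits $|G_1|\,\hat g_1(\chi)$ as a matrix coefficient of the compression of $\rho(\mathfrak{s})$ between the $U$-fixed torus eigenvectors $w_\chi$ and $w_{\bar\chi}$, and the Chinese remainder theorem factors that compression, on each irreducible constituent, as a tensor product of local compressions bounded by Lemma~\ref{lem:local-pp}. Your handling of the equivalent form, via inflation from $\mathrm{Gal}(\KK_0/\Q)$ and the invariance of the conductor, only spells out what the paper compresses into the identity $\hat g_1=e\,\hat g_{\KK_0}$.
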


\begin{proof}
By (R2) and the Chinese remainder theorem, $\rho$ is a unitary representation of $\prod_{\ell\mid N}\mathrm{SL}_2(\mathbb{Z}/\ell^{a_\ell})$, where $N=\prod_\ell\ell^{a_\ell}$, and $\mathfrak{s}$, $\mathfrak{t}$, $h_b$ correspond to the tuples of their images. Every irreducible constituent has the form $\pi=\bigotimes_\ell\pi_\ell$. A product of $\ell^{a_\ell}$-th roots of unity for distinct primes $\ell$ is $1$ only if every factor is $1$, so
\begin{equation}
  \pi^U=\bigotimes_\ell\pi_\ell^{U_\ell}.
\end{equation}
The $\chi$-eigenspace of the torus is the tensor product of the $\chi_\ell$-eigenspaces, where $\chi=\prod\chi_\ell$ and $\mathrm{cond}(\chi_\ell)=\ell^{v_\ell(\ff)}$; here $\ff=\ff(\chi)$ divides $N$ because $\mathbb{K}_1\subseteq\mathbb{Q}(\zeta_N)$. The compression of $\rho(\mathfrak{s})$ is the tensor product of the local compressions, which by Lemma~\ref{lem:local-pp} have norm at most $\ell^{-v_\ell(\mathfrak{f})/2}$ at primes $\ell\mid\mathfrak{f}$ and at most $1$ elsewhere. Hence $P\rho(\mathfrak{s})P$ has norm at most $\mathfrak{f}^{-1/2}$ on $V^U_\chi$. By Lemma~\ref{lem:lagvec}, the vectors $w_\chi\in V^U_\chi$ and $w_{\bar\chi}\in V^U_{\bar\chi}$ both have norm $|G_1|^{1/2}$. Therefore
\begin{equation}
  |G_1|\,|\hat{g}_1(\chi)|=|\langle w_{\bar\chi},P\rho(\mathfrak{s})Pw_\chi\rangle|\le\mathfrak{f}^{-1/2}\,|G_1|.
\end{equation}
The second statement follows from $\hat{g}_1=e\,\hat{g}_{\mathbb{K}_0}$.
\end{proof}

For a real abelian field $\mathbb{K}$ put $J(\mathbb{K}):=\sum_\chi1/\mathfrak{f}(\chi)$, the sum over the characters of $\mathrm{Gal}(\mathbb{K}/\mathbb{Q})$. Since the conductor of every nontrivial even character is at least $5$, we have $J(\KK)-1\le([\KK:\Q]-1)/5$.

\begin{corollary}\label{cor:orbit-is}
The Galois conjugates of $D$ satisfy
\begin{align}
\sum_{\sigma\in\mathrm{Gal}(\KK_0/\Q)}\sigma(D)^{-2}&\le\frac{J(\KK_0)}{e^2d},&&\text{equivalently}&I_s^{(1)}(\C)&\leq\frac{J(\KK_0)}{ed},\label{eq:Is}\\
\sum_{\tau\in\mathrm{Gal}(\mathbb{K}_0/\Q)}\bigl(\tau(1/D)-\bar x\bigr)^2&\le\frac{J(\KK_0)-1}{e^2d},&&\text{where}&\bar x&=\frac{m_1}{ed}.\label{eq:variance}
\end{align}
\end{corollary}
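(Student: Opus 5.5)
The plan is to apply Parseval on the group $H:=\mathrm{Gal}(\KK_0/\Q)$ to the function $f(\tau)=\tau(1/D)$ and bound each Fourier coefficient with Theorem~\ref{thm:orbit}.

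\textbf{First inequality.} Since $|H|=d$, Parseval reads
\begin{equation*}
\sum_{\tau\in H}f(\tau)^2=\frac1d\sum_{\chi\in\widehat H}|\hat g_{\KK_0}(\chi)|^2,
\end{equation*}
where $\hat g_{\KK_0}(\chi)=\sum_\tau\chi(\tau)f(\tau)$. Each $\chi\in\widehat H$ inflates to a character of $G_1$ with the same conductor, because the conductor depends only on the fixed field of $\ker\chi$. Theorem~\ref{thm:orbit} therefore gives $|\hat g_{\KK_0}(\chi)|^2\le 1/(e^2\mathfrak f(\chi))$. Summing over $\chi$ yields
\begin{equation*}
\sum_{\sigma\in H}\sigma(D)^{-2}\le \frac{J(\KK_0)}{e^2d}.
\end{equation*}
Each value $\sigma(D)$ occurs exactly $e$ times as $\sigma$ ranges over $G_1$, so $I_s^{(1)}(\C)=e\sum_{\sigma\in H}\sigma(D)^{-2}$. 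This gives the equivalent form $I_s^{(1)}(\C)\le J(\KK_0)/(ed)$.

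\textbf{Second inequality.} The variance identity is
\begin{equation*}
\sum_{\tau\in H}(f(\tau)-\bar f)^2=\sum_\tau f(\tau)^2-d\,\bar f^2,
\end{equation*}
where $\bar f$ is the mean of $f$ over $H$. By Parseval, the right-hand side equals $\frac1d\sum_{\chi\neq1}|\hat g_{\KK_0}(\chi)|^2$, since the trivial coefficient is $\hat g_{\KK_0}(1)=d\bar f$. Next, $\sum_{\tau\in H}f(\tau)=m_1/e$ because each value is repeated $e$ times in $G_1$. Hence $\bar f=m_1/(ed)=\bar x$, as stated. Applying Theorem~\ref{thm:orbit} to the nontrivial characters gives the bound
\begin{equation*}
\frac{1}{e^2d}\sum_{\chi\neq1}\frac1{\mathfrak f(\chi)}=\frac{J(\KK_0)-1}{e^2d},
\end{equation*}
using that the trivial character has conductor $1$.

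\textbf{Main point of care.} The only step requiring attention is the bookkeeping between $G_1$ and $H$: the factor $e$ in $\hat g_1=e\,\hat g_{\KK_0}$, the multiplicity $e$ in $I_s^{(1)}$, and the invariance of the conductor under inflation. Once these are tracked, nothing else is difficult.
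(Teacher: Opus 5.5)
Your proposal is correct and follows the paper's proof essentially step for step. Both apply Parseval over $\mathrm{Gal}(\KK_0/\Q)$ to $\tau\mapsto\tau(1/D)$, use $\hat g_{\KK_0}(1)=m_1/e$ to identify the mean $\bar x$, bound each coefficient with the second form of Theorem~\ref{thm:orbit}, and account for the multiplicity $e$ in $I_s^{(1)}(\C)$.
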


\begin{proof}
By Parseval's identity over $\mathrm{Gal}(\mathbb{K}_0/\mathbb{Q})$,
\begin{equation}
d\sum_\sigma\sigma(1/D)^2=\sum_\chi|\hat{g}_{\mathbb{K}_0}(\chi)|^2,\qquad d\sum_\tau(\tau(1/D)-\bar x)^2=\sum_{\chi\neq1}|\hat{g}_{\mathbb{K}_0}(\chi)|^2,
\end{equation}
since $\bar{x}$ is the mean of the numbers $\tau(1/D)$ and $\hat{g}_{\mathbb{K}_0}(1)=m_1/e$. Apply Theorem~\ref{thm:orbit}. Each value $\sigma(1/D)$ occurs $e$ times in $I_s^{(1)}(\C)$.
\end{proof}

\begin{corollary}\label{cor:conjugates}
Every Galois conjugate of $D=\dim(\C)$ satisfies
\begin{equation}
  \sigma(D)\geq\frac{ed}{m_1+\sqrt{(d-1)(J(\KK_0)-1)}}\geq\frac{ed\sqrt{5}}{\sqrt5+d-1}.  
\end{equation}
\end{corollary}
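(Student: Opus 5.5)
The plan is to deduce the bound from the variance inequality \eqref{eq:variance} of Corollary~\ref{cor:orbit-is}. We combine it with a Cauchy--Schwarz estimate that isolates a single conjugate. Fix $\sigma\in\mathrm{Gal}(\KK_0/\Q)$ and write $x_\tau:=\tau(1/D)$ for $\tau\in\mathrm{Gal}(\KK_0/\Q)$. These are $d$ positive reals with mean $\bar x=m_1/(ed)$. Since $\sum_\tau(x_\tau-\bar x)=0$, we have
\begin{equation*}
x_\sigma-\bar x=-\sum_{\tau\neq\sigma}(x_\tau-\bar x).
\end{equation*}
Cauchy--Schwarz then gives $(x_\sigma-\bar x)^2\le(d-1)\sum_{\tau\neq\sigma}(x_\tau-\bar x)^2$. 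Adding $(x_\sigma-\bar x)^2$ to both sides yields the standard sharpened bound
\begin{equation*}
(x_\sigma-\bar x)^2\le\frac{d-1}{d}\sum_\tau(x_\tau-\bar x)^2.
\end{equation*}

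Inserting \eqref{eq:variance} gives $(x_\sigma-\bar x)^2\le(d-1)(J(\KK_0)-1)/(e^2d^2)$. Hence
\begin{equation*}
x_\sigma\le\frac{m_1}{ed}+\frac{\sqrt{(d-1)(J(\KK_0)-1)}}{ed}=\frac{m_1+\sqrt{(d-1)(J(\KK_0)-1)}}{ed}.
\end{equation*}
Since $x_\sigma=1/\sigma(D)>0$, inverting gives the first inequality. Every Galois conjugate of $D$ is of the form $\sigma(D)$ for some $\sigma\in\mathrm{Gal}(\KK_0/\Q)$, so the first inequality covers all conjugates.

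For the second inequality, use $m_1\le1$ (Section~\ref{sec:prelim}) and $J(\KK_0)-1\le(d-1)/5$, which was noted just before Corollary~\ref{cor:orbit-is}. Then $\sqrt{(d-1)(J-1)}\le(d-1)/\sqrt5$, so the denominator is at most $1+(d-1)/\sqrt5=(\sqrt5+d-1)/\sqrt5$. This yields $\sigma(D)\ge ed\sqrt5/(\sqrt5+d-1)$.

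The only step needing care is the sharpened Cauchy--Schwarz with factor $(d-1)/d$ rather than $d-1$. Without it the $d$ in the final denominator would not match. I would verify it by the add-to-both-sides trick described above. The case $d=1$ is trivial, since then $x_\sigma=\bar x$.
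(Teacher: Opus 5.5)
Your proposal is correct and follows the paper's proof essentially step for step. You use Cauchy--Schwarz on the other $d-1$ deviations to get $(x_\sigma-\bar x)^2\le\frac{d-1}{d}\sum_\tau(x_\tau-\bar x)^2$, insert the variance bound \eqref{eq:variance}, and then use $m_1\le1$ and $J(\KK_0)-1\le(d-1)/5$ for the second inequality, exactly as the paper does.
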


\begin{proof}
For $\tau\in\mathrm{Gal}(\mathbb{K}_0/\mathbb{Q})$ put $x_\tau=\tau(1/D)$. Since the numbers $x_\tau-\bar x$ sum to $0$, the Cauchy--Schwarz inequality applied to the other $d-1$ of them gives $(x_\tau-\bar x)^2\le(d-1)\bigl(\sum_\tau(x_\tau-\bar x)^2-(x_\tau-\bar x)^2\bigr)$, that is, $(x_\tau-\bar x)^2\le\frac{d-1}d\sum_\tau(x_\tau-\bar x)^2$. By~\eqref{eq:variance}, every $x_\tau$ is at most $\bar x+\sqrt{(d-1)(J(\KK_0)-1)}/(de)$. This is the first inequality. The second follows from $m_1\le1$ and $J(\KK_0)-1\le(d-1)/5$.
\end{proof}

\begin{corollary}\label{cor:YLmin}
If $\mathcal{C}\not\simeq\mathrm{Vec}$, then $\dim(\mathcal{C})\geq\tfrac{1}{2}(5-\sqrt{5})$, with equality only if $\mathbb{K}_0=\mathbb{Q}(\sqrt{5})$, $e=1$ and $m_1=1$.
\end{corollary}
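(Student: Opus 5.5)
Let $\mathcal{C}\not\simeq\mathrm{Vec}$ and $D=\dim(\mathcal{C})$. We split on $d=[\mathbb{K}_0:\mathbb{Q}]$.

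\emph{Case $d=1$.} Then $D$ is a rational integer. The case $D=1$ forces $\mathcal{C}\simeq\mathrm{Vec}$, since $\dim(\mathcal{C})\ge 1$ with equality only for $\mathrm{Vec}$ (every simple object has $|\dim X|^2\ge1$). So $D\ge2>\tfrac12(5-\sqrt5)\approx1.382$.

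\emph{Case $d\ge2$.} Apply Corollary~\ref{cor:conjugates} to the conjugate $\sigma=\mathrm{id}$. It gives
\begin{equation*}
D\ \ge\ \frac{ed\sqrt5}{\sqrt5+d-1}\ \ge\ \frac{d\sqrt5}{\sqrt5+d-1}.
\end{equation*}
The function $d\mapsto d\sqrt5/(\sqrt5+d-1)$ is increasing in $d$, because its derivative has sign $\sqrt5-1>0$. Its value at $d=2$ is
\begin{equation*}
\frac{2\sqrt5}{\sqrt5+1}=\frac{2\sqrt5(\sqrt5-1)}{4}=\frac{5-\sqrt5}{2}.
\end{equation*}
Hence $D\ge\tfrac12(5-\sqrt5)$ in all cases.

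\emph{Equality.} Suppose $D=\tfrac12(5-\sqrt5)$. Then $d=2$ is forced, since for $d\ge3$ the strict monotonicity gives a strictly larger bound. Every inequality in the chain must also be an equality:
\begin{itemize}
\item $e=1$, since otherwise the bound at $d=2$ becomes $e(5-\sqrt5)/2$, which is strictly larger;
\item $m_1=1$, from the first inequality of Corollary~\ref{cor:conjugates};
\item $J(\mathbb{K}_0)-1=(d-1)/5=1/5$, from the second inequality.
\end{itemize}
For a quadratic field, $J(\mathbb{K}_0)-1=1/\mathfrak{f}$, where $\mathfrak{f}$ is the conductor of its nontrivial even character. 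So $\mathfrak{f}=5$. The only real quadratic field of conductor $5$ is $\mathbb{Q}(\sqrt5)$, so $\mathbb{K}_0=\mathbb{Q}(\sqrt5)$.

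Alternatively, this identification is automatic: $\mathbb{K}_0=\mathbb{Q}(D)=\mathbb{Q}(\sqrt5)$ holds directly once $D=\tfrac12(5-\sqrt5)$. Then $e=1$ and $m_1=1$ follow as above.

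The main point to handle with care is the equality analysis. One must check that each step of Corollary~\ref{cor:conjugates} yields a strict inequality when the corresponding parameter differs: $d\ge3$, or $e\ge2$, or $m_1<1$. This follows from strict monotonicity of the bound in each parameter. Beyond that, the argument reduces to the one-variable inequality above.
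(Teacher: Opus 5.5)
Your route is the paper's. You treat rational $D$ separately, then use the second bound of Corollary~\ref{cor:conjugates}, which increases in $e$ and $d$ and equals $\tfrac12(5-\sqrt5)$ at $e=1$, $d=2$, and you read off the equality conditions from there.

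One justification is false and must be replaced. In the case $d=1$ you assert that every simple object has $|\dim X|^2\ge1$. The extremal example of this very statement refutes it: the nontrivial simple object of the Yang--Lee category has $\dim(X)^2=\tfrac12(3-\sqrt5)\approx0.382$. If your claim were true, $\dim(\mathcal{C})$ would be at least the rank of $\mathcal{C}$. That would force $D\ge2$ whenever $\mathcal{C}\not\simeq\mathrm{Vec}$, which $\mathcal{YL}$ contradicts.

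What you actually need is only $|X|^2>0$ for every simple $X$ \cite[Theorem 2.3]{MR2183279}. This gives $\dim(\mathcal{C})>1$ unless $\mathcal{C}\simeq\mathrm{Vec}$, and hence $D\ge2$ when $D\in\mathbb{Z}$. The paper cites \cite[Theorem 4.1.1]{ostrikremarks} at this point instead.

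A minor labelling point: $m_1=1$ and $J(\mathbb{K}_0)-1=\tfrac15$ both come from equality in the \emph{second} inequality of Corollary~\ref{cor:conjugates}. That is where $m_1\le1$ and $J(\mathbb{K}_0)-1\le(d-1)/5$ are used; equality in the first inequality says nothing about $m_1$.

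Your shortcut $\mathbb{K}_0=\mathbb{Q}(D)=\mathbb{Q}(\sqrt5)$ in the equality case is valid.
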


\begin{proof}
If $D\in\mathbb{Q}$, then $D$ is a rational algebraic integer, and $D\neq1$ by \cite[Theorem 4.1.1]{ostrikremarks}, so $D\geq2$. Otherwise $d\geq2$. The second bound of Corollary~\ref{cor:conjugates} increases with $e$ and $d$, and at $e=1$, $d=2$ it equals $2\sqrt5/(\sqrt5+1)=\tfrac{1}{2}(5-\sqrt5)$. Equality requires $e=1$, $d=2$ and equality in both inequalities of Corollary~\ref{cor:conjugates}, that is, $m_1=1$ and $J(\mathbb{K}_0)-1=\frac{1}{5}$, which forces $\mathbb{K}_0=\mathbb{Q}(\sqrt{5})$.
\end{proof}

\begin{note}
Corollary~\ref{cor:YLmin} shows ``by hand'' that $\tfrac{1}{2}(5-\sqrt{5})$, the global dimension of the Yang--Lee category, is the smallest global dimension of a spherical fusion category other than $1$; this also follows from the computer search in \cite[Proposition A.1.1]{ostrikremarks}.
\end{note}

\begin{note}
Sphericality enters the proof of Theorem~\ref{thm:orbit} through the modularity of $\mathcal{Z}(\mathcal{C})$ with first Gauss sum $D$ \cite{MR1966525}, through the reality of the dimensions (Lemma~\ref{lem:rows}), and through the description of $I(\mathbbm{1})$ in \cite{MR3427429}.
\end{note}

\section{An example: the even part of 2D2}\label{sec:example}

We follow the proof of Theorem~\ref{thm:orbit} through a category of global dimension $D_3$. Let $\mathcal{C}$ be the Galois conjugate of the even part of the 2D2 subfactor \cite{MR4939540,Izu18,MP15} in which the dimension of the non-invertible simple objects is negative. It has simple objects $\mathbbm{1},\alpha,\eta,\alpha\eta$ (denoted $\mathbbm{1},\alpha,\rho,\alpha\rho$ in \cite{MR4939540}) with
\begin{equation}
\alpha\otimes\alpha\cong\mathbbm{1},\qquad\eta\otimes\eta\cong\mathbbm{1}\oplus2\eta\oplus2\alpha\eta,
\end{equation}
and we write $\lambda:=\dim(\eta)=2-\sqrt{5}$ and $D=2+2\lambda^2=20-8\sqrt5=D_3\approx2.11146$. Here $\KK_1=\KK_0=\Q(\sqrt5)$, so $e=1$, $d=2$ and $G_1=\{1,\sigma\}$ with $\sigma(\sqrt5)=-\sqrt5$. The ring $K(\C)$ has four characters: the dimension orbit $\dim$ and $\sigma(\dim)=\mathrm{FPdim}$, with formal codegrees $D$ and $\sigma(D)=20+8\sqrt5$, and two rational characters $\psi_\pm$ with $\psi_\pm(\alpha)=-1$ and $\psi_\pm(\eta)=\pm1$, each with formal codegree $4$. The characters of $G_1$ are the trivial character and $\chi=\bigl(\frac{\cdot}{5}\bigr)$, of conductor $5$. The data of $\mathcal{Z}(\mathcal{C})$ used below is the Galois conjugate of that computed in \cite[Section 3.2]{MR4939540}. Directly from the definitions,
\begin{equation}
\hat{g}_1(1)=m_1=\frac1D+\frac1{\sigma(D)}=\frac12\quad\text{ and }\quad \hat{g}_1(\chi)=\frac1D-\frac1{\sigma(D)}=\frac{16\sqrt5}{80}=\frac1{\sqrt5}.
\end{equation}
So $|\hat{g}_1(\chi)|^2=1/\ff(\chi)$, and Theorem~\ref{thm:orbit} holds with equality for $\chi$. For the trivial character the bound is strict: $m_1=\frac12<1$, and the remaining $\frac12=\frac14+\frac14$ of $\sum_E\dim(E)/f_E=1$ comes from the two formal codegrees equal to $4$. By \cite[Section 3.2]{MR4939540},
\begin{equation}
  I(\mathbbm{1})\cong X_0\oplus X_1\oplus X_2\oplus X_3,
\end{equation}
with $\dim(X_0)=1$, $\dim(X_1)=9-4\sqrt{5}=\tfrac{D}{\sigma(D)}$, and $\dim(X_2)=\dim(X_3)=5-2\sqrt{5}=\tfrac{D}{4}$, all with trivial twist. So $A_1=X_0$, $A_\sigma=X_1$, and $X_2$, $X_3$ are the summands $A_{\psi_\pm}$. By~\eqref{eq:ostrik-S} and Lemma~\ref{lem:rows}, the block of $s$ indexed by $X_0,X_1,X_2,X_3$ is
\begin{equation}\label{eq:2D2-s}
  \begin{pNiceMatrix}[margin,columns-width=auto,cell-space-limits=2pt]
  \Block[draw,rounded-corners]{2-2}{}
  1/D&1/\sigma(D)&1/4&1/4\\
  1/\sigma(D)&1/D&1/4&1/4\\
  1/4&1/4&\ast&\ast\\
  1/4&1/4&\ast&\ast
  \end{pNiceMatrix},
\end{equation}
with the dimension block boxed. Only the first two rows are needed for the proof. The entries $\ast$ are the numbers $\frac14\psi_\pm(F(X_j))$ for $j=2,3$, which depend on the restrictions $F(X_2)$, $F(X_3)$ described in \cite[Lemma 3.14]{MR4939540}. The boxed block has eigenvectors $(1,1)$ and $(1,-1)$ with eigenvalues $m_1=\frac12$ and $\hat{g}_1(\chi)=1/\sqrt5$, as in Corollary~\ref{cor:block}.

The summands $Y_i$ of $I(\alpha)$ satisfy $\theta_{Y_i}^2=\nu_2(\alpha)$, where $\nu_2(\alpha)=-1$ is the second Frobenius--Schur indicator of $\alpha$, denoted $\lambda_\alpha$ in \cite[Section 3.2 and Theorem 3.43]{MR4939540}. With $\sqrt5\in\Q(\zeta_N)$ this gives $20\mid N$, so in the proof of Theorem~\ref{thm:orbit} the representation $\rho$ has at least two local factors. Since $\chi$ has conductor $5$, the bound comes from the factor at $\ell=5$, where the compression has norm at most $5^{-1/2}$ by Lemma~\ref{lem:local-pp}; the other factors contribute at most $1$. Since $\psi_\pm$ are rational-valued, the rows of $s$ at $X_2$ and $X_3$ are rational by~\eqref{eq:ostrik-S}, and the proof of Lemma~\ref{lem:lagvec}(b) shows that $\rho(h_b)$ fixes $\mathbf e_{X_2}$ and $\mathbf e_{X_3}$. The same proof shows that $\rho(h_b)$ fixes or exchanges $\mathbf e_{X_0}$ and $\mathbf e_{X_1}$ according as $\chi(b)=1$ or $-1$. Hence
\begin{equation}
  w_1=\mathbf e_{X_0}+\mathbf e_{X_1},\ \mathbf e_{X_2},\ \mathbf e_{X_3}\in V^U_1,\qquad w_\chi=\mathbf e_{X_0}-\mathbf e_{X_1}\in V^U_\chi .
\end{equation}
Other twist-fixed simple objects of $\mathcal{Z}(\mathcal{C})$ may also contribute to $V^U$; we do not need them.

By Lemma~\ref{lem:rows}, the coordinate of $s\,w_\chi$ at a simple object $Y$ is $\dim(Y)/D-\sigma\bigl(\dim(Y)/D\bigr)$. On the summands of $I(\mathbbm{1})$ this gives
\begin{equation}
(s\,w_\chi)_{X_0}=\frac1{\sqrt5},\quad (s\,w_\chi)_{X_1}=-\frac1{\sqrt5},\quad (s\,w_\chi)_{X_2}=(s\,w_\chi)_{X_3}=0,
\end{equation}
so that $\langle w_\chi,s\,w_\chi\rangle=2/\sqrt{5}=|G_1|\,\hat{g}_1(\chi)$; the coordinates at $X_2$ and $X_3$ vanish because $\dim(X_2)/D=\frac{1}{4}$ is rational. The proof of Theorem~\ref{thm:orbit} bounds more than the matrix coefficient: it gives $\|P\rho(\mathfrak{s})P\,w_\chi\|^2\leq\mathfrak{f}(\chi)^{-1}\|w_\chi\|^2=\frac{2}{5}$. The two coordinates at $X_0$ and $X_1$ already account for $\frac{1}{5}+\frac{1}{5}=\frac{2}{5}$. Therefore $s\,w_\chi$ has no component at any other simple object with trivial twist, that is,
\begin{equation}
  \theta_Y=1,\ Y\notin\{X_0,X_1\}\quad\Longrightarrow\quad \frac{\mathrm{dim}(Y)}D\in\Q .
\end{equation}
This holds for $X_2$ and $X_3$, and the proof predicts it for every other twist-fixed simple object of $\mathcal{Z}(\mathcal{C})$ without computing them. The remaining $\frac85$ of $\|s\,w_\chi\|^2=2$ lies on simple objects with nontrivial twist. By contrast, the coordinates of $s\,w_1$ at $X_0,X_1,X_2,X_3$ are all $\frac12$, so $w_1$ is not fixed by $s$; this reflects $m_1<1$. With $J(\KK_0)=1+\frac15$ and the actual value $m_1=\frac12$, the first inequality of Corollary~\ref{cor:conjugates} is an equality:
\begin{equation}
  D\ge\frac{2}{\frac12+\frac1{\sqrt5}}=\frac{4\sqrt5}{\sqrt5+2}=20-8\sqrt5 .
\end{equation}
The second inequality, which replaces $m_1$ by $1$, only gives $D\geq\tfrac{1}{2}(5-\sqrt{5})$. In the notation of Lemma~\ref{lem:dimtwo-sqrt5}, $D=\tfrac{1}{2}(u-k\sqrt{5})$ with $u=40$, $k=16$ and $n=DD'=80=5k$, and $(k+2)^2-5\cdot8^2=4$ is the solution $L_6=18$, $F_6=8$ of the Pell equation, so $D=D_3$.

\section{Spherical fusion categories of small global dimension}\label{app:dimtwo}

We keep the notation of Section~\ref{sec:prelim}, and recall $\alpha_3$, $\alpha_5$ and $D_m$ from Section~\ref{sec:intro}. The Galois conjugate of $D_m$ is $\sqrt{5}\,(\phi^{2m}-1)$, so $D_m$ is a $d$-number with $m_1=F_{2m}/(L_{2m}-2)\le1$.

\begin{theorem}\label{thm:dimtwo}
Let $\mathcal{C}$ be a spherical fusion category with $\mathrm{dim}(\mathcal{C})<\sqrt5$. Then
\begin{equation}
  \mathrm{dim}(\mathcal{C})\in\{1,\ 2,\ \alpha_3,\ \alpha_5\}\cup\{D_m:m\ge1\}.
\end{equation}
Moreover, the seven smallest values $1$, $D_1$, $\alpha_3$, $D_2$, $2$, $D_3$, $\alpha_5$ are the global dimensions of spherical fusion categories (Proposition~\ref{prop:realize}).
\end{theorem}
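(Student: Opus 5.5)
The plan is to use Corollary~\ref{cor:conjugates} to reduce to finitely many fields $\mathbb{K}_0$ and values of $e$, and then to classify the admissible $D$ in each case by Theorem~\ref{thm:orbit} together with the arithmetic of $d$-numbers.

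\emph{Reducing to few fields.} Since $D<\sqrt5$ and $\sigma=1$ is allowed in Corollary~\ref{cor:conjugates}, we need $ed\sqrt5/(\sqrt5+d-1)<\sqrt5$, i.e.\ $ed<\sqrt5+d-1$. This forces $e=1$ with any $d$, or $e=2$ with $d\le 2$. That alone is not enough, so we sharpen it using the first inequality with $J(\mathbb{K}_0)$ in place of the crude bound. A nontrivial even character has conductor $5$ only for the quadratic character of $\mathbb{Q}(\sqrt5)$. The next even conductors are $7$ (cubic), $8$, $11$, $12$, $13$, and so on. Hence $J(\mathbb{K}_0)-1$ is much smaller than $(d-1)/5$ unless $\mathbb{K}_0$ is built from small conductors. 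Requiring $d/(m_1+\sqrt{(d-1)(J-1)})<\sqrt5$ with $m_1\le1$ leaves a short explicit list:
\begin{itemize}
\item $d=1$, where $D\in\{1,2\}$ since $D$ is a positive integer $<\sqrt5$;
\item $\mathbb{K}_0=\mathbb{Q}(\sqrt5)$, the real quadratic fields of conductor $8$ and $12$, and possibly $13$;
\item the cubic fields $\mathbb{Q}(\zeta_7)^+$ and $\mathbb{Q}(\zeta_9)^+$;
\item perhaps $\mathbb{Q}(\zeta_{11})^+$, $\mathbb{Q}(\zeta_{15})^+$, and the quartic field in $\mathbb{Q}(\zeta_{16})$.
\end{itemize}
Each of these has to be checked by hand. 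For $e=2$, $d=2$ the bound becomes $4/(m_1+1/\sqrt{\mathfrak f})\ge 4/(1+1/\sqrt5)>\sqrt5$, which kills $e=2$ except when $d=1$; there $D\ge 2$ and $D$ is an even integer, giving $D=2$.

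\emph{Quadratic case.} Write $D=\tfrac12(u-k\sqrt{\Delta})$ with conjugate $D'$ and $n=DD'$. Theorem~\ref{thm:orbit} gives $|1/D-1/D'|\le 1/\sqrt{\Delta}$, i.e.\ $k\sqrt\Delta\le n/\sqrt\Delta$. We also have $m_1=u/n\le1$, and the $d$-number condition says $D$ divides suitable powers of $D'$. For $\Delta=5$ this becomes $n\ge 5k$. The $d$-number property should then force $n=5k$ exactly, and the norm equation turns into the Pell equation $(k+2)^2-5F^2=4$ seen in Section~\ref{sec:example}. Its solutions give exactly the $D_m$, and the condition $D<\sqrt5$ holds for all of them. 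For $\Delta=8,12,13$ I expect the same inequalities together with $D<\sqrt5$ and total positivity to leave no solutions.

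\emph{Cubic case and realizations.} For a cubic field with $e=1$, Corollary~\ref{cor:orbit-is} bounds $\sum\sigma(D)^{-2}$ and the variance. Together with $D<\sqrt5$, this bounds the trace and norm of $D$, leaving a finite search over totally positive integers of $\mathbb{Q}(\zeta_7)^+$ and $\mathbb{Q}(\zeta_9)^+$. This search should yield exactly $\alpha_3$ and $\alpha_5$, both in $\mathbb{Q}(\zeta_7)^+$, where equality holds in the bound. The same style of search handles the remaining degree $\ge4$ candidates, which I expect to be empty; this case analysis is where the computer-assisted Lemma~\ref{lem:dimtwo-values} enters. Finally, Proposition~\ref{prop:realize} realizes the seven smallest values:
\begin{itemize}
\item $1$ by $\mathrm{Vec}$;
\item $2$ by $\mathrm{Vec}_{\mathbb{Z}/2}$;
\item $D_1$ by Yang--Lee;
\item $D_2$ by $\mathcal{YL}\boxtimes\mathcal{YL}$;
\item $D_3$ by the 2D2 conjugate;
\item $\alpha_3$ and $\alpha_5$ by Galois conjugates of $\mathcal{C}(\mathfrak{sl}_2,5)_{\mathrm{ad}}$ and of the corresponding rank-three $\mathfrak{sl}_2$ category at level $12$ (or the $\mathbb{Z}/7$-related Haagerup-type category).
\end{itemize}
The main obstacle is the quadratic $\mathbb{Q}(\sqrt5)$ step, specifically proving that $n=5k$ exactly, which I would attempt through the $d$-number divisibility of $D$ and $D'$ at the prime above $5$.
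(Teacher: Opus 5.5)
Your overall plan matches the paper's: use Corollary~\ref{cor:conjugates} to limit $e$ and $d$, then combine Theorem~\ref{thm:orbit}, Ostrik's $d$-number test and a finite search, with the Pell equation over $\mathbb{Q}(\sqrt5)$. But the field reduction your case analysis rests on is false.

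\textbf{The field list is incomplete.} With $m_1\le1$, the condition $d/(m_1+\sqrt{(d-1)(J(\mathbb{K}_0)-1)})<\sqrt5$ is vacuous for $d=2$, since $2<\sqrt5$. So no real quadratic field is excluded, not just those beyond conductor $13$. For $d=3$ the condition reads $4/\mathfrak{f}>(3/\sqrt5-1)^2\approx0.117$. This keeps the cyclic cubic fields of conductor $13$, $19$ and $31$, besides $7$ and $9$. For $d=4$ it only asks $J(\mathbb{K}_0)-1>0.2074$. This keeps, for instance, $\mathbb{Q}(\sqrt2,\sqrt3)$, $\mathbb{Q}(\sqrt2,\sqrt5)$, $\mathbb{Q}(\sqrt3,\sqrt5)$ and $\mathbb{Q}(\zeta_{20})^+$. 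You also give no argument that $d\ge5$ is impossible. The paper (Lemma~\ref{lem:dimtwo-fields}) needs:
\begin{itemize}
\item the cyclic structure, and the conductors of characters of each order, for $d=5,6,7$;
\item the crude conductor bound for $8\le d\le30$;
\item the estimate $J(\mathbb{K}_0)-1<\sqrt{d-1}$ for $d\ge31$.
\end{itemize}
So a search over your list says nothing about the omitted fields, even though the paper's search shows they contribute nothing.

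The paper avoids listing fields at all. Once $e=1$ and $d\le4$ are known, \eqref{eq:spread} together with $1/D>1/\sqrt5$ bounds the largest conjugate uniformly: $D^\ast\le10.68$, $92.8$, $34.7$ for $d=2,3,4$, with $\mathbb{Q}(\sqrt5)$ handled separately. One then enumerates minimal polynomials, which covers every field at once. For quadratic fields, your own inequality $|1/D-1/D'|\le\mathfrak{f}^{-1/2}$ with $\mathfrak{f}\ge8$ already gives such a bound, so that part is repairable with your tools.

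\textbf{The step $n=5k$ is still open.} The step you flag as the main obstacle is not resolved, and $d$-number divisibility at $5$ cannot settle it alone. Take $D=5-\sqrt5=2D_1$, the global dimension of $\mathcal{YL}\boxtimes\mathrm{Vec}_{\mathbb{Z}/2}$. It has $u=10$, $k=2$, $n=20=5k^2$; it satisfies $n\ge5k$ and $n\mid u^2$, yet $n\neq5k$.

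The missing input is the size bound from $D<\sqrt5$:
\begin{itemize}
\item It gives $u<(k+2)\sqrt5$, hence $4n=u^2-5k^2<20k+20$, so $n\le5k+4$.
\item Since $u^2=4n+5k^2$, the test $n\mid u^2$ is equivalent to $n\mid5k^2$.
\item Then $5k^2/n$ is an integer in $[5k^2/(5k+4),k]\subset(k-1,k]$, which forces $n=5k$ (Lemma~\ref{lem:dimtwo-sqrt5}).
\end{itemize}

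\textbf{The realization of $\alpha_5$ is wrong.} $\mathcal{C}(\mathfrak{sl}_2,12)_{\mathrm{ad}}$ has rank seven and global dimension $7/(1-\cos(\pi/7))\approx70.69$. Its conjugates are exactly twice the roots of $t^3-42t^2+245t-343$, so its smallest conjugate is about $4.31$, not $\alpha_5$. The paper instead uses $\mathcal{C}(\mathfrak{sl}_3,4)_{\mathrm{ad}}$, whose global dimension $\approx35.34242$ is the largest root of that cubic.
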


The proof occupies the rest of this section. If $D\in\mathbb{Q}$, then $D$ is a rational algebraic integer, so $D\in\{1,2\}$. Now assume $D$ is irrational, and first assume that $D$ is the smallest of its Galois conjugates; this assumption is removed at the end.

\begin{lemma}\label{lem:dimtwo-fields}
Let $D<\sqrt{5}$ be irrational and the smallest of its conjugates. Then $e=1$, and $\mathbb{K}_0$ is one of the following:
\begin{enumerate}
\item a real quadratic field;
\item a cyclic cubic field of conductor $7$, $9$, $13$, $19$ or $31$;
\item a real abelian quartic field, in which case $J(\mathbb{K}_0)-1\leq\frac{7}{20}$.
\end{enumerate}
\end{lemma}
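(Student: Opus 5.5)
The plan is to feed the hypothesis $D<\sqrt5$, with $D$ the smallest conjugate, into Corollary~\ref{cor:conjugates} applied at $\sigma=1$:
\begin{equation}
\sqrt5> D\geq\frac{ed}{m_1+\sqrt{(d-1)(J(\KK_0)-1)}}.
\end{equation}
Since $D$ is irrational we have $d\geq2$. The constraints on $e$, $d$ and $J(\KK_0)$ will come from this single inequality, together with $m_1\le1$ and the crude bound $J(\KK_0)-1\le(d-1)/5$.

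\textbf{Step 1: bounding $d$ and $e$.} The weaker form of Corollary~\ref{cor:conjugates} gives
\begin{equation}
\sqrt5> \frac{ed\sqrt5}{\sqrt5+d-1},
\end{equation}
so $ed<\sqrt5+d-1$, that is, $(e-1)d<\sqrt5-1\approx1.236$. Since $d\ge2$, this forces $e=1$.

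With $e=1$ the weak bound is vacuous. Instead, bound $J(\KK_0)-1$ more carefully using that a nontrivial even primitive character has conductor in $\{5,8,12,13,\dots\}$, and in particular that at most one character pair has conductor $5$. The requirement $d<\sqrt5\,(1+\sqrt{(d-1)(J-1)})$ then fails for large $d$, because $J(\KK_0)-1$ grows much more slowly than $d-1$.

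\textbf{Step 2: quantifying Step 1.} For a real abelian field, the characters of conductor $\le X$ are characters of $\mathbb{Q}(\zeta_X)^+$ subgroups, so few of them have small conductor. Concretely, I would show $J-1\le \frac{d-1}{d'}$ fails for $d\ge5$ in the needed range, where $d'$ is an effective lower bound on the typical conductor. The cleanest route is a case check by group structure. A real abelian field of degree $d$ has characters forming a group of order $d$, and the number of even primitive characters of conductor $f$ is bounded by explicit counts: conductor $5$ (one pair, order $2$ and the quartic pair), $8$, $12$, $13$, \dots. Summing the $d-1$ smallest possible values of $1/f$, subject to the group constraint, bounds $J-1$, and then $d\ge5$ is excluded. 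I expect this to be the main obstacle, since it requires a careful but finite enumeration of small-conductor even characters and an argument that the worst case is realized by the smallest conductors.

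\textbf{Step 3: the cubic and quartic cases.} For $d=3$ we have $\KK_0$ cyclic cubic with two characters of conductor $f$, so $J-1=2/f$. The inequality becomes
\begin{equation}
3<\sqrt5\,\bigl(1+\sqrt{4/f}\bigr),
\end{equation}
i.e.\ $\sqrt{4/f}>3/\sqrt5-1\approx0.3416$, which gives $f<34.3$. Cyclic cubic conductors are $7,9,13,19,31,37,\dots$, so the list is $7,9,13,19,31$.

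For $d=4$ the inequality reads $4<\sqrt5\,(1+\sqrt{3(J-1)})$. This gives $\sqrt{3(J-1)}>4/\sqrt5-1\approx0.789$, so $J-1>0.2075$; this is the lower bound consistent with the stated range. The stated upper bound $J-1\le7/20$ I would obtain by listing the smallest possible conductor configurations in a $C_4$ or $C_2\times C_2$ group. Conductor $5$ can appear at most once as a real character (or as the quartic pair from $\mathbb{Q}(\zeta_5)^+$, which is only quadratic, since $\zeta_5$ gives a real field of degree $2$). The best case is $\{5,8,40\}$ or $\{5,12,60\}$, etc.; for example $1/5+1/8+1/40=0.35=7/20$. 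So $J-1\le7/20$ for every real quartic abelian field, and I would check that no configuration exceeds this.

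The quadratic case $d=2$ needs nothing further.
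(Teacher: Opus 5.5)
Your derivation of $e=1$ from the weak bound and your cubic case match the paper. The gap is Step 2, which is the heart of the lemma: excluding $d\geq5$ is announced rather than proved, and the plan rests on a false premise.

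The set $\{5,8,12,13,\dots\}$ is the set of conductors of even \emph{quadratic} characters. The cubic characters of conductors $7$ and $9$ are also even (you use them yourself in Step 3), and so are the quintic characters of conductor $11$. So the conductors of nontrivial even primitive characters begin $5,7,7,8,9,9,11,11,11,11,12,13,\dots$, and a bound on $J(\mathbb{K}_0)-1$ built from your list is not valid.

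Even with the correct list, summing the $d-1$ smallest reciprocals does not exclude $d=5,6,7$. It gives about $0.611$, $0.722$, $0.833$, which exceed the thresholds $(d/\sqrt5-1)^2/(d-1)\approx0.382$, $0.567$, $0.756$. For these degrees the group structure must be used concretely, as the paper does:
\begin{itemize}
\item For $d=5,7$ the field is cyclic of prime degree, so all nontrivial characters share one conductor, at least $11$ resp.\ $29$.
\item For $d=6$ the field is cyclic. The divisibility $\mathrm{lcm}(\mathfrak{f}_2,\mathfrak{f}_3)\mid\mathfrak{f}_6$, together with $5\nmid\mathfrak{f}_3$ when $\mathfrak{f}_2=5$, gives $J(\mathbb{K}_0)-1<0.565$ against $0.5667$. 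That margin is far too thin for a heuristic ``worst case'' argument.
\end{itemize}

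Your claim that $J(\mathbb{K}_0)-1$ grows much more slowly than $d-1$ also needs a quantitative form valid for all $d$. The paper uses that fewer than $x^2/4$ even primitive characters have conductor at most $x$. Hence the $j$th conductor exceeds $2\sqrt{j}$, so $J(\mathbb{K}_0)-1<\sqrt{d-1}$, which contradicts the required inequality $(d-1)(J(\mathbb{K}_0)-1)\geq(d/\sqrt5-1)^2$ once $d\geq31$. The range $8\leq d\leq30$ is then checked with the explicit list.

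The quartic bound $J(\mathbb{K}_0)-1\leq\frac{7}{20}$ is likewise checked for only one configuration. Three cases are needed:
\begin{itemize}
\item \emph{Cyclic quartic.} The quartic characters of conductors $5$ and $13$ are odd, so an even quartic character has conductor at least $15$, and $J(\mathbb{K}_0)-1\leq\frac15+\frac2{15}=\frac13$. Your remark about $\mathbb{Q}(\zeta_5)^+$ points toward this but misses conductor $13$ and gives no bound.
\item \emph{Biquadratic containing $\mathbb{Q}(\sqrt5)$.} The other two discriminants are $\delta$ and $5\delta$ with $\delta\geq8$ prime to $5$, because $(\mathbb{Z}/5^k)^\times$ carries a unique quadratic character. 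This gives exactly $\frac15+\frac{6}{5\delta}\leq\frac{7}{20}$ and justifies your ``best case'' $\{5,8,40\}$.
\item \emph{Other biquadratic fields.} The three discriminants are distinct and at least $8,12,13$, giving $J(\mathbb{K}_0)-1<\frac13$.
\end{itemize}
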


\begin{proof}
The second bound in Corollary~\ref{cor:conjugates} is increasing in $e$ and $d$. For $e\geq2$ and $d\geq2$ it gives $D\geq4\sqrt{5}/(\sqrt{5}+1)\approx2.764$. So $e=1$. For $d\geq3$, the first inequality of Corollary~\ref{cor:conjugates} together with $m_1\leq1$ and $D<\sqrt{5}$ gives
\begin{equation}\label{eq:dimtwo-J}
  (d-1)(J(\mathbb{K}_0)-1)\geq(\frac{d}{\sqrt5}-1)^2 .
\end{equation}
The conductors of the even primitive Dirichlet characters begin, in increasing order and with multiplicity, $5,7,7,8,9,9,11,11,11,11,12,13,\dots$. Since $\mathbb{K}_0$ has $d-1$ distinct nontrivial characters, $J(\mathbb{K}_0)-1$ is at most the sum of the reciprocals of the first $d-1$ entries of this list. We call this the \emph{crude bound}.
\begin{itemize}
\item For $8\leq d\leq30$ the crude bound shows that~\eqref{eq:dimtwo-J} fails. For $d\geq31$, note that a conductor $\mathfrak{f}\geq3$ carries at most $(\mathfrak{f}-1)/2$ even primitive characters, so fewer than $x^2/4$ have conductor at most $x$. Hence the $j$th smallest conductor among the nontrivial characters of $\mathbb{K}_0$ exceeds $2\sqrt {j}$, and $J(\mathbb{K}_0)-1<\sum_{j<d}(2\sqrt {j})^{-1}\leq\sqrt{d-1}$. The left side of~\eqref{eq:dimtwo-J} is then less than $(d-1)^{3/2}$, which at $d=31$ is $164.3$ against $165.5$ on the right, and the ratio of the two sides decreases in $d$ because its logarithmic derivative $\frac{3}{2(d-1)}-\frac{2}{d-\sqrt{5}}$ is negative.
\item For $d=5$ and $d=7$ the field is cyclic of prime degree. Its nontrivial characters share one conductor, which is at least $11$ and $29$ respectively. So $J(\mathbb{K}_0)-1$ is at most $\frac{4}{11}<0.364$ and $\frac{6}{29}<0.207$. The right side of~\eqref{eq:dimtwo-J} divided by $d-1$ is $0.382$ and $0.757$.
\item For $d=6$ the field is cyclic. It has one quadratic character $\chi^3$ of conductor $\mathfrak{f}_2$, two cubic characters $\chi^{\pm2}$ of conductor $\mathfrak{f}_3$, and two characters $\chi^{\pm1}$ of order $6$ of conductor $\mathfrak{f}_6$. Since $\mathfrak{f}_2$ and $\mathfrak{f}_3$ divide $\mathfrak{f}_6$, the conductor $\mathfrak{f}_6$ is a multiple of $\mathrm{lcm}(\mathfrak{f}_2,\mathfrak{f}_3)$. If $\mathfrak{f}_2=5$, then $5\nmid\mathfrak{f}_3$, because $(\mathbb{Z}/5^k)^\times$ has no element of order $3$; so $\mathfrak{f}_6\geq5\mathfrak{f}_3$ and $J(\mathbb{K}_0)-1\leq\frac{1}{5}+\frac{12}{5\mathfrak{f}_3}\le\frac{1}{5}+\frac{12}{35}<0.543$. If $\mathfrak{f}_2\ge8$, then $\mathrm{lcm}(\mathfrak{f}_2,\mathfrak{f}_3)\geq13$, because $\mathfrak{f}_3\in\{7,9,13,\dots\}$ and $\mathfrak{f}_2\in\{8,12,13,\dots\}$, so $J(\mathbb{K}_0)-1\leq\frac{1}{8}+\frac{2}{7}+\frac{2}{13}<0.565$. The right side of~\eqref{eq:dimtwo-J} divided by $5$ is $0.5666$.
\item For $d=3$ both nontrivial characters have the conductor $\mathfrak{f}$ of $\mathbb{K}_0$, and~\eqref{eq:dimtwo-J} reads $4/\mathfrak{f}\geq(3/\sqrt{5}-1)^2\approx0.1167$. So $\mathfrak{f}\leq34$. The cyclic cubic conductors up to $34$ are $7$, $9$, $13$, $19$ and $31$.
\end{itemize}

It remains to bound $J(\mathbb{K}_0)$ for real abelian quartic fields.
\begin{itemize}
\item If $\mathbb{K}_0$ is cyclic, its quadratic character has conductor at least $5$, and its two characters of order $4$ are even, so have conductor at least $15$: the quartic characters of conductors $5$ and $13$ are odd. So $J(\mathbb{K}_0)-1\leq\frac{1}{5}+\frac{2}{15}=\frac{1}{3}$.
\item If $\mathbb{K}_0$ is biquadratic and does not contain $\mathbb{Q}(\sqrt{5})$, its three quadratic subfields have distinct discriminants different from $5$, so $J(\mathbb{K}_0)-1\leq\frac{1}{8}+\frac{1}{12}+\frac{1}{13}<\frac{1}{3}$.
\item If $\mathbb{K}_0$ is biquadratic and contains $\Q(\sqrt{5})$, its other two quadratic subfields have discriminants $\delta$ and $5\delta$, for a fundamental discriminant $\delta\geq8$ prime to $5$. So $J(\mathbb{K}_0)-1=\frac{1}{5}+\frac{6}{5\delta}\leq\frac{7}{20}$.
\end{itemize}
In the quartic case~\eqref{eq:dimtwo-J} only requires $J(\mathbb{K}_0)-1\geq(4/\sqrt{5}-1)^2/3\approx0.2075$, which does not exclude any of the three types.
\end{proof}

\begin{lemma}\label{lem:dimtwo-sqrt5}
Let $D<\sqrt{5}$ be a $d$-number with $\mathbb{Q}(D)=\mathbb{Q}(\sqrt{5})$ which is smaller than its conjugate $D'$, and suppose that $\sqrt{5}\,(1/D-1/D')\leq1$. Then $D=D_m$ for some $m\geq1$.
\end{lemma}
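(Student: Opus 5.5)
The plan is to turn the $d$-number hypothesis into the statement that $D'/D$ is a unit of $\mathbb{Z}[\phi]$. Then $D$ is pinned down by an integrality squeeze coming from the two inequalities $D<\sqrt5$ and $\sqrt5(1/D-1/D')\le1$.

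\emph{Step 1 (the unit).} Write $\mathcal{O}=\mathbb{Z}[\phi]$, the ring of integers of $\mathbb{Q}(\sqrt5)$. Since $D$ is a $d$-number, $D'/D$ is an algebraic integer, and by symmetry so is $D/D'$. The reason is that the ideal $(D)\subseteq\mathcal{O}$ is Galois-stable, so $(D)=(D')$. It follows that $\varepsilon:=D'/D\in\mathcal{O}^\times$. Both $D$ and $D'$ are positive, being conjugate global-dimension-type numbers: $D>0$ by hypothesis, and $D'>D$. Hence $\varepsilon>1$, and its conjugate $D/D'$ is also positive. So $\varepsilon$ is a totally positive unit of norm $1$ exceeding $1$, which forces $\varepsilon=\phi^{2j}$ for some $j\ge1$. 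This step is where I expect the only real care to be needed: one must check the $d$-number characterization in the quadratic case, namely that $D$ and $D'$ generate the same principal ideal. I would argue this directly from $D'/D$ and $D/D'$ being algebraic integers lying in $\mathbb{Q}(\sqrt5)$.

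\emph{Step 2 (parametrize).} Write $D'-D=k\sqrt5$. Here $k$ is a positive half-integer a priori; it is in fact an integer or half-integer according to the parity of the trace. From $D'=\phi^{2j}D$ we get
\begin{equation*}
D=\frac{k\sqrt5}{\phi^{2j}-1},\qquad n:=DD'=\frac{5k^2\phi^{2j}}{(\phi^{2j}-1)^2}=\frac{5k^2}{c},\qquad c:=(\phi^{j}-\phi^{-j})^2=\phi^{2j}+\phi^{-2j}-2 .
\end{equation*}
Note that $c=L_{2j}-2\in\mathbb{Z}$.

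\emph{Step 3 (the squeeze).} The hypothesis $\sqrt5(1/D-1/D')\le1$ reads $\sqrt5(D'-D)/n=5k/n\le1$, that is, $k\ge c$. The hypothesis $D<\sqrt5$ reads $k<\phi^{2j}-1=c+1-\phi^{-2j}<c+1$. Both $k$ and $c$ are integers here; for a half-integral $k$, the same inequalities still force $k=c$ once one notes $n=5k^2/c$ must be rational with $4n\in\mathbb{Z}$. Therefore $k=c$, and
\begin{equation*}
D=\frac{c\sqrt5}{\phi^{2j}-1}=\sqrt5\,\phi^{-j}(\phi^j-\phi^{-j})=\sqrt5\,(1-\phi^{-2j})=D_j .
\end{equation*}
As a consistency check, this gives $n=5c=5(L_{2j}-2)$ and trace $5F_{2j}$, matching the minimal polynomial of $D_m$ stated in the introduction. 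This is also the $u=40$, $k=16$, $n=80=5k$ pattern seen for $D_3$ in Section~\ref{sec:example}.
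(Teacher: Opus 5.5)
Your proof is correct, and it takes a genuinely different route from the paper.

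\textbf{Comparison with the paper.} The paper works with the coefficient form of Ostrik's test. It writes $D=\frac12(u-k\sqrt5)$ and $n=DD'$, so that $n\mid u^2$ becomes $n\mid 5k^2$. The two inequalities give $5k\le n\le 5k+4$, and a squeeze on the integer $5k^2/n$ forces $n=5k$. Then $u^2=5k(k+4)$ becomes the Pell equation $(k+2)^2-5w^2=4$, and its solutions $(L_{2m},F_{2m})$ are quoted.

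You instead use the ideal-theoretic form of the $d$-number condition. It shows at once that $D'/D$ is a totally positive unit of $\mathbb{Z}[\phi]$ exceeding $1$, hence equal to $\phi^{2j}$. After that, the two hypotheses become exactly $c\le k<c+1-\phi^{-2j}$ with $c=L_{2j}-2$, and $D=D_j$ comes out in closed form.

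Both proofs rest on the same arithmetic input, since the solutions of $x^2-5y^2=4$ are the norm-one units $\frac12(x+y\sqrt5)$. Yours brings the unit group in first, skips the Pell step, and explains why $D_m'/D_m=\phi^{2m}$. The paper's needs only the divisibility $n\mid u^2$ and integer bookkeeping.

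\textbf{One correction.} Your claim that $k$ may be a half-integer is wrong. Since $D\in\mathbb{Z}[\phi]$, write $D=a+b\phi$; then $D-D'=b\sqrt5$, so $k=-b\in\mathbb{Z}$. The parity of the trace only decides whether the coefficient of $\sqrt5$ in $D$ itself is half-integral, not whether $k$ is.

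This matters because your fallback argument for half-integral $k$ is false as written. For $j=1$ you have $c=1$ and $c+1-\phi^{-2}=\phi$. Then $k=\frac32$ satisfies $c\le k<\phi$ and gives $n=5k^2/c=\frac{45}{4}$, so $4n\in\mathbb{Z}$. What actually rules this out is $n\in\mathbb{Z}$: for $k=c+\frac12$ one gets $4cn=5(2c+1)^2$, which is odd. More simply, $k\in\mathbb{Z}$ excludes it directly.

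Replace the hedge with the one-line observation $k\in\mathbb{Z}$, and your squeeze $c\le k<c+1$ finishes the proof.

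Also, $D>0$ is not literally a hypothesis of the lemma. Like the paper, you are taking it from the context of global dimensions, and you should say so rather than cite it as a hypothesis.
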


\begin{proof}
Write $D=\tfrac{1}{2}(u-k\sqrt{5})$ with integers $u,k\geq1$, and put $n=DD'=(u^2-5k^2)/4$, so that the minimal polynomial of $D$ is $t^2-ut+n$. Since $1/D-1/D'=k\sqrt{5}/n$, the hypothesis reads $5k/n\leq1$, that is $n\geq5k$. And $D<\sqrt{5}$ gives $u<(k+2)\sqrt{5}$, hence $u^2<5k^2+20k+20$, that is $n<5k+5$. Now $u^2=4n+5k^2$, so the $d$-number condition $n\mid u^2$ \cite{MR2576705} says $n\mid5k^2$. Therefore $q=5k^2/n$ is an integer with
\begin{equation}
  k-1<\frac{5k^2}{5k+4}\leq q\leq\frac{5k^2}{5k}=k ,
\end{equation}
so $q=k$ and $n=5k$. Then $u^2=5k(k+4)$, so $5\mid u$, and $u=5w$ turns this into the Pell equation $(k+2)^2-5w^2=4$. Its positive solutions are $k+2=L_{2m}$ and $w=F_{2m}$ with $m\geq1$, so $u=5F_{2m}$ and $n=5(L_{2m}-2)$. This is the minimal polynomial of $D_m$.
\end{proof}

For an irrational $D$ with $e=1$ we call $v(D):=\max_{\chi\neq1}\mathfrak{f}(\chi)\,|\hat{g}_{\mathbb{K}_0}(\chi)|^2$ its \emph{value}; Theorem~\ref{thm:orbit} says $v(D)\leq1$.

\begin{lemma}\label{lem:dimtwo-values}
Let $D<\sqrt{5}$ be irrational and the smallest of its conjugates. Then $D$ is $\alpha_3$, $\alpha_5$, or $D_m$ for some $m\geq1$, and it attains equality in Theorem~\ref{thm:orbit} for every nontrivial character.
\end{lemma}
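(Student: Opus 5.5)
Lemma~\ref{lem:dimtwo-fields} already gives $e=1$ and leaves three families of fields for $\mathbb{K}_0$; I would treat each by finite search. The constraints on $D$ are: $D$ is a totally positive algebraic integer and a $d$-number; $D<\sqrt5$ and $D$ is the least of its conjugates; $m_1=\hat g_{\mathbb{K}_0}(1)\le1$; and, by Theorem~\ref{thm:orbit}, $v(D)\le1$. I would combine these with Corollary~\ref{cor:conjugates} to get bounds on all conjugates, enumerate the candidates, and compute $v(D)$ exactly for each one that survives.

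\emph{Quadratic case.} Write $\mathbb{K}_0=\mathbb{Q}(\sqrt\delta)$, with $\delta$ the fundamental discriminant, and $D=\tfrac12(u-k\sqrt\delta)$. The single nontrivial character has conductor $\delta$. Theorem~\ref{thm:orbit} then reads $\sqrt\delta\,(1/D-1/D')\le1$. Corollary~\ref{cor:conjugates} with $d=2$ gives $D\ge 2/(1+\delta^{-1/2})$, and $D<\sqrt5$ forces $\delta$ to be small. The bound I expect is $\delta<\bigl(\sqrt5/(2-\sqrt5)\bigr)^2$ in absolute value, but this needs checking; in any case it leaves only finitely many $\delta$. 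For $\delta=5$, Lemma~\ref{lem:dimtwo-sqrt5} applies directly and gives $D=D_m$. For every other admissible $\delta$, I would repeat the argument of Lemma~\ref{lem:dimtwo-sqrt5}. Set $n=DD'$. Then $v\le1$ gives $n\ge\delta k$, $D<\sqrt5$ bounds $n$ from above, and the $d$-number condition $n\mid u^2$ should pin down $n$ and produce a Pell-type equation. I would then check that either no solution satisfies $D<\sqrt5$ or the solution has $m_1>1$. This step needs care: the squeeze window in Lemma~\ref{lem:dimtwo-sqrt5} was narrow because $\sqrt5$ is both the upper bound on $D$ and $\sqrt{\mathfrak f}$, and that coincidence fails for $\delta\ne5$.

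\emph{Cubic and quartic cases.} In these cases all conjugates are bounded, because Corollary~\ref{cor:conjugates} bounds $\sigma(D)$ from below and $m_1\le1$ bounds it from above. Indeed, $\sum_\sigma 1/\sigma(D)\le1$ together with the lower bound on each $1/\sigma(D)$ bounds the largest conjugate. So for each listed field (conductors $7,9,13,19,31$ in the cubic case, and the finitely many quartic fields with $J-1\le7/20$ permitted by \eqref{eq:dimtwo-J}) there are finitely many integral elements with bounded conjugates. For each such $D$, I would test total positivity, $D<\sqrt5$, the $d$-number property, $m_1\le1$, and $v(D)\le1$. The survivors should be exactly $\alpha_3$ (conductor $7$) and $\alpha_5$ (again conductor $7$, with minimal polynomial $t^3-42t^2+245t-343$). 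The quartic case is where I expect the real work: the inequality \eqref{eq:dimtwo-J} is not violated there, so the search must use the exact character sums, not just the crude bound. This is presumably the computer-assisted step.

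Finally, for each survivor ($\alpha_3$, $\alpha_5$, $D_m$) I would verify directly that $\mathfrak f(\chi)|\hat g_{\mathbb{K}_0}(\chi)|^2=1$ for every $\chi\ne1$. For $D_m$ this follows from $1/D_m-1/D_m'=\sqrt5\,k/n=1/\sqrt5$, since $n=5k$. For $\alpha_3$ and $\alpha_5$ it is an explicit Gauss-sum-type computation in $\mathbb{Q}(\zeta_7)^+$. For example, the constant term $343=7^3$ makes $1/\sigma(D)$ a scaled algebraic integer whose conjugate differences can be computed from the minimal polynomial.
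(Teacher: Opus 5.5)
Your overall plan matches the paper's: restrict $\KK_0$ by Lemma~\ref{lem:dimtwo-fields}, search using the $d$-number test, $m_1\le1$ and $v(D)\le1$, then check equality on the survivors. However, the step that makes the search finite is not justified by what you cite.

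In the cubic and quartic cases you say the conjugates are bounded because Corollary~\ref{cor:conjugates} bounds each $\sigma(D)$ from below and $m_1\le1$ bounds them from above. Neither bounds the largest conjugate $D^\ast$. Corollary~\ref{cor:conjugates} is an \emph{upper} bound on each $1/\sigma(D)$, and $m_1=\sum_\sigma1/\sigma(D)\le1$ is an upper bound on their sum. Both only become easier to satisfy as $D^\ast$ grows. Without a genuine upper bound on $D^\ast$, your tests do not define a finite set. For example, in the conductor-$7$ cubic field take the totally positive unit $\varepsilon=(2\cos\frac{4\pi}{7})^2\approx0.198$, whose other conjugates are $\approx1.555$ and $3.247$. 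Put $D=r\varepsilon^k$ with $r\in\mathbb{Z}$ chosen so that $D\in(\tfrac32,\sqrt5)$. Each such $D$ is a $d$-number (an integer times a unit), is the smallest of its conjugates, has all conjugates $>1$ and $m_1<1$, yet $D^\ast\to\infty$ as $k\to\infty$.

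The missing idea is to use $v(D)\le1$ itself, via \eqref{eq:variance}, as a bound on the \emph{spread} of the numbers $1/\sigma(D)$, anchored at the small conjugate. From $(x-y)^2\le2(x-\bar x)^2+2(y-\bar x)^2$ you get $\tfrac12(1/D-1/D^\ast)^2\le(J(\KK_0)-1)/d$. Together with $1/D>1/\sqrt5$ this gives $1/D^\ast\ge1/\sqrt5-\sqrt{2(J(\KK_0)-1)/d}$. The right side is positive exactly when $J(\KK_0)-1<d/10$. This holds for the cubic fields ($J-1\le\frac27$, so $D^\ast\le92.8$) and the quartic ones ($J-1\le\frac{7}{20}$, so $D^\ast\le34.7$). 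This is how the paper makes its enumeration of minimal polynomials finite.

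The quadratic case has the same slip. Corollary~\ref{cor:conjugates} with $d=2$ gives only $D\ge2/(1+\delta^{-1/2})$, which is below $2<\sqrt5$ for every $\delta$, so it excludes nothing; your threshold $\sqrt5/(2-\sqrt5)$ being negative is the symptom. Here, though, the squeeze you propose next repairs it on its own. The window gets narrower for $\delta\ne5$, not wider. The condition $v\le1$ is $n\ge\delta k$, and $D<\sqrt5$ gives $n<\sqrt{5\delta}\,k+5$, so $k\sqrt\delta(\sqrt\delta-\sqrt5)<5$. This forces $\delta\in\{5,8,12,13\}$, with $k\le2$ for $\delta=8$ and $k=1$ for $\delta=12,13$. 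In these cases the only $D$ in the window is $5-2\sqrt2$, with minimal polynomial $t^2-10t+17$, and it fails the $d$-number test because $17\nmid100$. So for $\delta\ne5$ no Pell equation arises, and the quadratic case closes by hand. This is a real simplification of the paper's treatment, which bounds $D^\ast\le10.68$ by the spread inequality and enumerates, finding six surviving polynomials of value $>1$.

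Your verification of equality for the survivors is fine. The cubic and quartic cases, however, need the spread bound above before any search is finite.
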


\begin{proof}
By Lemma~\ref{lem:dimtwo-fields}, $e=1$ and $\mathbb{K}_0$ is quadratic, cubic or quartic, and $D$ is a $d$-number \cite{MR2576705}. Put $\beta_\tau=\tau(D)^{-1}$ for $\tau\in\mathrm{Gal}(\mathbb{K}_0/\mathbb{Q})$, and let $\bar\beta$ be their mean. By~\eqref{eq:variance},
\begin{equation}\label{eq:spread}
  \frac{1}{2}(\max_\tau\beta_\tau-\min_\tau\beta_\tau)^2\le\sum_\tau(\beta_\tau-\bar\beta)^2\leq\frac{J(\mathbb{K}_0)-1}d .
\end{equation}
Since $\max_\tau\beta_\tau=1/D>1/\sqrt{5}$, this bounds the largest conjugate $D^\ast$ of $D$, except in the one case where the bound degenerates.
\begin{itemize}
\item \emph{Quadratic fields other than $\mathbb{Q}(\sqrt{5})$.} Here~\eqref{eq:spread} is an equality on the left, and gives $1/D^\ast\geq1/\sqrt{5}-1/\sqrt{\mathfrak{f}}$ with $\mathfrak{f}\geq8$ the discriminant, so $D^\ast\leq10.68$. For $\mathbb{Q}(\sqrt{5})$ the bound degenerates, and Lemma~\ref{lem:dimtwo-sqrt5} replaces it.
\item \emph{Cyclic cubic fields.} Here $J(\mathbb{K}_0)-1=2/\mathfrak{f}\leq\frac{2}{7}$, so $1/D^\ast\geq1/\sqrt{5}-2/\sqrt{21}$ and $D^\ast\leq92.8$.
\item \emph{Quartic fields.} Here $J(\mathbb{K}_0)-1\le\frac{7}{20}$ by Lemma~\ref{lem:dimtwo-fields}, so $1/D^\ast\ge1/\sqrt5-\sqrt{7/40}$ and $D^\ast\le34.7$.
\end{itemize}
All conjugates of $D$ exceed $1$, by \cite[Theorem 4.1.1]{ostrikremarks} applied to the Galois conjugates of $\mathcal{C}$. Let $a_i$ be the elementary symmetric functions of the conjugates of $D$, so that the minimal polynomial is $p(t)=\sum_i(-1)^ia_it^{d-i}$, and let $B=10.7$, $92.8$ or $34.7$ according as $d=2$, $3$ or $4$, so that $D^\ast\leq B$. We enumerate the integer tuples $(a_1,\dots,a_d)$ satisfying the following conditions:
\begin{itemize}
\item Ostrik's $d$-number test $a_d^{\,i}\mid a_i^{\,d}$ for $1\leq i\leq d$ \cite{MR2576705};
\item $m_1=a_{d-1}/a_d\in(1/\sqrt{5},1]$;
\item the bounds $2\le a_d\leq\sqrt5\,B^{d-1}$, $d<a_1\leq\sqrt5+(d-1)B$ and $a_2\leq(d-1)a_1^2/(2d)$, and for $d=4$ also $p(1)>0$, all of which hold when the roots lie in $(1,B]$ and the smallest is below $\sqrt5$.
\end{itemize}
We keep those tuples for which $p$ is irreducible with all roots in $(1,B]$, the smallest below $\sqrt5$, and abelian Galois group, and for these we compute $v(D)$. The computations are carried out by the script \texttt{small\_dimensions\_search.py}, available as an ancillary file; it uses Python with NumPy and SymPy and runs in under a minute. The outcome is as follows.
\begin{itemize}
\item \emph{Quadratic.} For $\mathbb{K}_0=\mathbb{Q}(\sqrt{5})$ the character of conductor $5$ gives $\hat{g}_{\mathbb{K}_0}=1/D-1/D'$, so $v(D)\le1$ is the hypothesis of Lemma~\ref{lem:dimtwo-sqrt5}, which leaves exactly the numbers $D_m$; each has $v(D_m)=1$. For the other real quadratic fields, six polynomials remain, with values $\frac{16}9$, $4$, $4$, $9$, $36$ and $49$.
\item \emph{Cubic.} Of the $7{,}252$ triples enumerated, $868$ have all roots in $(1,92.8]$ with the smallest below $\sqrt5$; of these, $840$ are irreducible and $48$ have cyclic Galois group. Exactly two have value at most $1$, namely $t^3-14t^2+49t-49$ and $t^3-42t^2+245t-343$, both of conductor $7$ and value $1$.
\item \emph{Quartic.} Of the $370{,}181$ tuples enumerated, $103$ have all roots in $(1,34.7]$ with the smallest below $\sqrt{5}$; of these, $79$ are irreducible and $22$ have abelian Galois group. Each of the $22$ has value greater than $1$. The smallest value, $\frac{5}{2}$, is attained by $t^4-30t^3+240t^2-720t+720$, whose splitting field is the cyclic quartic field of conductor $15$: its quadratic character sum vanishes, and $|\hat{g}_{\mathbb{K}_0}(\chi)|^2=\frac{1}{6}$ for the two characters of order $4$.\qedhere
\end{itemize}
\end{proof}

For comparison, Table~\ref{tab:dimtwo-cand} lists the abelian candidates of degree $2$ or $3$ that pass the $d$-number test, have all conjugates in $(1,150]$ and smallest conjugate at most $c_0:=D_4$, and have value at most $2$. The value measures how far each misses Theorem~\ref{thm:orbit}. The table is produced by the same script.

\begin{table}[h]
\centering\footnotesize
\setlength{\tabcolsep}{4pt}
\renewcommand{\arraystretch}{1.3}
\begin{tabular}{lcccc@{\qquad}lcccc}
\toprule
polynomial & $D$ & $\mathfrak{f}$ & $m_1$ & $v(D)$ & polynomial & $D$ & $\mathfrak{f}$ & $m_1$ & $v(D)$\\
\midrule
$t^2-5t+5$ & $1.3820$ & $5$ & $1$ & $\mathbf 1$ & $t^3-56t^2+336t-448$ & $1.9371$ & $7$ & $\frac{3}{4}$ & $\frac{21}{16}$\\
$t^3-14t^2+56t-56$ & $1.5060$ & $7$ & $1$ & $\frac{7}{4}$ & $t^3-182t^2+8281t-15379$ & $1.9389$ & $7$ & $\frac{7}{13}$ & $\frac{301}{169}$\\
$t^2-12t+16$ & $1.5279$ & $5$ & $\frac{3}{4}$ & $\frac{25}{16}$ & $t^3-50t^2+600t-1000$ & $1.9806$ & $7$ & $\frac{3}{5}$ & $\frac{147}{100}$\\
$t^3-30t^2+125t-125$ & $1.5399$ & $7$ & $1$ & $\frac{49}{25}$ & $t^3-63t^2+294t-343$ & $1.9818$ & $9$ & $\frac{6}{7}$ & $\frac{81}{49}$\\
$t^2-30t+45$ & $1.5836$ & $5$ & $\frac{2}{3}$ & $\frac{16}{9}$ & $t^3-150t^2+1800t-3000$ & $1.9934$ & $9$ & $\frac{3}{5}$ & $\frac{189}{100}$\\
$t^2-77t+121$ & $1.6049$ & $5$ & $\frac{7}{11}$ & $\frac{225}{121}$ & $t^2-98t+196$ & $2.0426$ & $5$ & $\frac{1}{2}$ & $\frac{225}{196}$\\
$t^2-84t+144$ & $1.7508$ & $5$ & $\frac{7}{12}$ & $\frac{25}{16}$ & $t^3-102t^2+2601t-4913$ & $2.0505$ & $9$ & $\frac{9}{17}$ & $\frac{567}{289}$\\
$t^2-12t+18$ & $1.7574$ & $8$ & $\frac{2}{3}$ & $\frac{16}{9}$ & $t^2-72t+144$ & $2.0589$ & $8$ & $\frac{1}{2}$ & $\frac{16}{9}$\\
$t^3-98t^2+392t-392$ & $1.7634$ & $7$ & $1$ & $\frac{7}{4}$ & $t^3-196t^2+9604t-19208$ & $2.0880$ & $7$ & $\frac{1}{2}$ & $\frac{43}{28}$\\
$t^3-45t^2+486t-729$ & $1.7826$ & $7$ & $\frac{2}{3}$ & $\frac{49}{27}$ & $t^2-40t+80$ & $2.1115$ & $5$ & $\frac{1}{2}$ & $\mathbf 1$\\
$t^3-14t^2+49t-49$ & $1.8412$ & $7$ & $1$ & $\mathbf 1$ & $t^3-42t^2+245t-343$ & $2.1558$ & $7$ & $\frac{5}{7}$ & $\mathbf 1$\\
$t^3-36t^2+180t-216$ & $1.8479$ & $7$ & $\frac{5}{6}$ & $\frac{49}{36}$ & $t^3-108t^2+2916t-5832$ & $2.1711$ & $9$ & $\frac{1}{2}$ & $\frac{7}{4}$\\
$t^3-24t^2+144t-192$ & $1.8716$ & $9$ & $\frac{3}{4}$ & $\frac{27}{16}$ & $t^3-55t^2+726t-1331$ & $2.1787$ & $7$ & $\frac{6}{11}$ & $\frac{147}{121}$\\
$t^2-91t+169$ & $1.8967$ & $5$ & $\frac{7}{13}$ & $\frac{225}{169}$ & $t^2-105t+225$ & $2.1885$ & $5$ & $\frac{7}{15}$ & $\mathbf 1$\\
$t^2-15t+25$ & $1.9098$ & $5$ & $\frac{3}{5}$ & $\mathbf 1$ &  & & & & \\
\bottomrule
\end{tabular}
\caption{Abelian candidates of degree $2$ or $3$ with all conjugates in $(1,150]$, smallest conjugate $D\leq c_0=D_4$, and value $v(D)=\mathrm{max}_{\chi\neq1}\mathfrak{f}(\chi)|\hat{g}_{\mathbb{K}_0}(\chi)|^2$ at most $2$, ordered by $D$. The six with value $1$ are $\alpha_3$, $\alpha_5$ and $D_1,\dots,D_4$. By Lemma~\ref{lem:dimtwo-values}, the only candidates with value at most $1$ and $c_0<D<\sqrt{5}$ are the $D_m$ with $m\geq5$.}
\label{tab:dimtwo-cand}
\end{table}

\begin{proposition}\label{prop:realize}
The numbers $1$, $D_1$, $\alpha_3$, $D_2$, $2$, $D_3$ and $\alpha_5$ are global dimensions of spherical fusion categories.
\end{proposition}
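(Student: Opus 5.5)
The plan is to exhibit, for each of the seven numbers, an explicit spherical fusion category (or a Galois conjugate of one) and to check its global dimension. No new theory is needed beyond the facts recalled in Section~\ref{sec:prelim}. In particular, Galois conjugation $\mathcal{C}\mapsto\mathcal{C}^\tau$ preserves sphericality and sends $\dim(\mathcal{C})$ to $\tau(\dim(\mathcal{C}))$. Deligne products multiply global dimensions. So it suffices to locate categories whose global dimension has the required number among its conjugates, or factors into such numbers.

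The easy cases come first.
\begin{itemize}
\item $1=\dim(\mathrm{Vec})$.
\item $2=\dim(\mathrm{Vec}_{\mathbb{Z}/2})$.
\item The Fibonacci category has $\dim=1+\phi^2=\tfrac12(5+\sqrt5)$. Its Galois conjugate, the Yang--Lee category $\mathcal{YL}$, has $\dim=\tfrac12(5-\sqrt5)=D_1$.
\item $D_2=D_1^2$ is realized by $\mathcal{YL}\boxtimes\mathcal{YL}$.
\item $D_3=20-8\sqrt5$ is realized by the Galois conjugate of the even part of 2D2 treated in Section~\ref{sec:example}. There $D=2+2\lambda^2$ with $\lambda=2-\sqrt5$, which is a one-line check.
\end{itemize}

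For the cubic numbers I would use $\mathcal{C}(\mathfrak{sl}_2,5)_{\mathrm{ad}}$. This is the rank-three adjoint subcategory of $SU(2)_5$, with simple dimensions $1$, $[3]_q$, $[5]_q$ and $q=e^{\pi i/7}$. Its global dimension generates $\mathbb{Q}(\zeta_7)^+$ and is a root of $t^3-14t^2+49t-49$ (up to rescaling of conjugates). Its smallest conjugate $\alpha_3$ is attained by the Galois conjugate in which $\sum\dim(X)^2$ is minimal. I would verify this by writing $\dim=\sum_{j\in\{1,3,5\}}[j]_{q'}^2$ over the three conjugates $q'=e^{k\pi i/7}$, $k=1,3,5$. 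I would check that the three values are the roots of the given cubic, which is the computation cited from \cite[Section 4.3]{MR3427429}.

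For $\alpha_5$, the natural candidate is the full category $\mathcal{C}(\mathfrak{sl}_2,5)$, of rank six. It is the Deligne product of its adjoint part with a rank-two pointed (semion-type) part, so its dimension is $2\alpha_3$, which is wrong. So instead I would look at $\mathcal{C}(\mathfrak{sl}_2,12)_{\mathrm{ad}}$ or related categories with field $\mathbb{Q}(\zeta_7)^+$, such as $\mathcal{C}(\mathfrak{so}_3,\cdot)$ or $\mathcal{C}(\mathfrak{g}_2,\cdot)$ at level giving $q$ a $7$th- or $14$th-root. One candidate is $(G_2)_1$ with Galois conjugate, but its dimension is quadratic, so I would rather test $\mathcal{C}(\mathfrak{sl}_3,q)$ at $q$ a $7$th root of unity: $SU(3)_4/\mathbb{Z}_3$ has cubic dimension over $\mathbb{Q}(\zeta_7)^+$. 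I would also consider the Haagerup-type or $\mathbb{Z}/3$-equivariantizations of $\mathcal{C}(\mathfrak{sl}_2,5)_{\mathrm{ad}}$. An equivariantization by $\mathbb{Z}/3$ would multiply the dimension by $3$, so that fails as well. The check is numerical: compute $\sum\dim(X)^2$ over all Galois conjugates and compare with the roots of $t^3-42t^2+245t-343$. Note that $343=7^3$ and $245=5\cdot7^2$, which suggests the dimension is $7$ times a unit-norm expression. This points toward a rank-four or rank-five category at a $7$th root of unity such as $SU(2)_5$ variants or $\mathfrak{so}_5$ at level related to $7$.

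Identifying the correct category for $\alpha_5$ is the main obstacle. The other six cases are immediate from earlier sections and standard examples.
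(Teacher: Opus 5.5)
\textbf{There is a genuine gap: the $\alpha_5$ case is not proved.} Your cases $1$, $D_1$, $D_2$, $2$, $D_3$ and $\alpha_3$ are correct and use the same examples as the paper: $\mathrm{Vec}$, $\mathcal{YL}$, $\mathcal{YL}\boxtimes\mathcal{YL}$, $\mathrm{Vec}_{\mathbb{Z}/2}$, a conjugate of the even part of 2D2, and a conjugate of $\mathcal{C}(\mathfrak{sl}_2,5)_{\mathrm{ad}}$. Your check for $\alpha_3$ works: $q'=e^{3\pi i/7}$ gives $1+[2]^2+[3]^2\approx1.84117$. For $\alpha_5$, however, you list candidates, verify none of them, and explicitly leave the case open. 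So one of the seven claims is unsupported. What is missing is a concrete spherical fusion category whose global dimension is a Galois conjugate of $\alpha_5$.

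The paper uses $\mathcal{C}(\mathfrak{sl}_3,4)_{\mathrm{ad}}$, which is essentially the ``$SU(3)_4/\mathbb{Z}_3$'' you mention and then drop. The operation you need is neither equivariantization nor de-equivariantization. It is passing to the trivial component of the faithful $\mathbb{Z}/3$-grading of $\mathcal{C}(\mathfrak{sl}_3,4)$, which divides the global dimension by $3$. This gives $1/(3S_{\mathbbm{1},\mathbbm{1}}^2)$ with $S_{\mathbbm{1},\mathbbm{1}}=\frac{8}{7\sqrt3}\sin^2(\pi/7)\sin(2\pi/7)$. That number is $\approx35.34242$, the largest root of $t^3-42t^2+245t-343$, and a Galois conjugate of the category then realizes $\alpha_5$.

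You can confirm this in the same way as your $\alpha_3$ computation. Put $[m]=\sin(m\theta)/\sin\theta$. The five simples $(0,0),(1,1),(3,0),(0,3),(2,2)$ have quantum dimensions $1,[2][3],[3],[3],[3]^2/[2]$, using $[4]=[3]$, $[5]=[2]$ and $[6]=1$ for $\theta=k\pi/7$ with $k$ odd. The sum of squares is $\approx35.34242$ at $\theta=\pi/7$, $\approx4.50173$ at $\theta=3\pi/7$, and $\approx2.15585=\alpha_5$ at $\theta=5\pi/7$.
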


\begin{proof}
By Section~\ref{sec:prelim}, Galois conjugates of spherical fusion categories are spherical, so it suffices to realize one Galois conjugate of each number. The categories $\mathrm{Vec}$, $\mathcal{YL}$, $\mathcal{YL}\boxtimes\mathcal{YL}$ and $\mathrm{Vec}_{\mathbb{Z}/2}$ have global dimensions $1$, $D_1$, $D_1^2=D_2$ and $2$. The category $\mathcal{C}(\mathfrak{sl}_2,5)_{\mathrm{ad}}$ has global dimension $7/(4\sin^2(\pi/7))\approx9.29590$, a root of $t^3-14t^2+49t-49$ \cite[Section 4.3]{MR3427429}. The even part of the 2D2 subfactor \cite{MR4939540,Izu18,MP15} has simple objects $\mathbbm{1},\alpha,\eta,\alpha\eta$ with $\eta\otimes\eta\cong\mathbbm{1}\oplus2\eta\oplus2\alpha\eta$, so $\mathrm{dim}(\eta)=2+\sqrt5$ and its global dimension is $2+2(2+\sqrt{5})^2=20+8\sqrt{5}$, the conjugate of $D_3$. Finally, the modular category $\C(\mathfrak{sl}_3,4)$ has global dimension $1/S_{\mathbbm{1},\mathbbm{1}}^2$ with $S_{\mathbbm{1},\mathbbm{1}}=\frac{8}{7\sqrt{3}}\sin^2(\pi/7)\sin(2\pi/7)$ \cite{BK,RowellQG}, and its adjoint subcategory, the trivial component of its $\mathbb{Z}/3$-grading, has global dimension one third of this, approximately $35.34242$. This is a root of $t^3-42t^2+245t-343$, whose roots are $2.15585$, $4.50173$ and $35.34242$.
\end{proof}

\begin{table}[h]
\centering\small
\begin{tabular}{llc>{\raggedright\arraybackslash}p{4.6cm}}
\toprule
$\mathrm{dim}(\mathcal{C})$ & minimal polynomial & $m_1$ & known spherical fusion categories of this dimension\\
\midrule
$1$ & $t-1$ & $1$ & $\mathrm{Vec}$\\
$D_1=\frac{5-\sqrt{5}}{2}\approx1.38197$ & $t^2-5t+5$ & $1$ & $\mathcal{YL}$\\
$\alpha_3\approx1.84117$ & $t^3-14t^2+49t-49$ & $1$ & Galois conjugates of $\mathcal{C}(\mathfrak{sl}_2,5)_{\mathrm{ad}}$\\
$D_2=\frac{15-5\sqrt{5}}{2}\approx1.90983$ & $t^2-15t+25$ & $\frac{3}{5}$ & $\mathcal{YL}\boxtimes\mathcal{YL}$\\
$2$ & $t-2$ & $\frac{1}{2}$ & the pointed categories with group $\mathbb{Z}/2$\\
$D_3=20-8\sqrt{5}\approx2.11146$ & $t^2-40t+80$ & $\frac{1}{2}$ & Galois conjugates of the even part of the 2D2 subfactor\\
$\alpha_5\approx2.15585$ & $t^3-42t^2+245t-343$ & $\frac57$ & a Galois conjugate of $\mathcal{C}(\mathfrak{sl}_3,4)_{\mathrm{ad}}$\\
$D_4\approx2.18847$ & $t^2-105t+225$ & $\frac{7}{15}$ & unknown\\
$D_m\ (m\geq5)$ & $t^2-5F_{2m}t+5(L_{2m}-2)$ & $\frac{F_{2m}}{L_{2m}-2}$ & unknown\\
\bottomrule
\end{tabular}
\caption{The possible global dimensions of spherical fusion categories below $\sqrt{5}$. Here $m_1$ is the mass of the dimension orbit. The $D_m$ increase to $\sqrt{5}$.}
\label{tab:dimtwo}
\end{table}

\begin{proof}[Proof of Theorem~\ref{thm:dimtwo}]
The rational case was treated above. Let $D<\sqrt{5}$ be irrational and let $D_0\leq D$ be its smallest conjugate. Then $D_0$ is the global dimension of a Galois conjugate of $\mathcal{C}$, which is again spherical (Section~\ref{sec:prelim}), so Lemma~\ref{lem:dimtwo-values} gives $D_0\in\{\alpha_3,\alpha_5\}\cup\{D_m:m\geq1\}$. The other conjugates of these numbers all exceed $\sqrt5$: those of $\alpha_3$ are $2.86294$ and $9.29590$, those of $\alpha_5$ are $4.50173$ and $35.34242$, and that of $D_m$ is $\sqrt{5}(\phi^{2m}-1)\ge\sqrt{5}\,\phi^2-\sqrt{5}=\sqrt{5}\,\phi>\sqrt{5}$. Hence $D=D_0$. The realizations are Proposition~\ref{prop:realize}.
\end{proof}

\begin{corollary}\label{cor:ostrik}
For every $c<\sqrt{5}$ the set $X_s\cap[1,c]$ is finite. In particular $\sqrt{2}$ is not a limit point of $X_s$, the interval $(\frac{1}{2}(5-\sqrt{5}),\sqrt{2})$ contains no point of $X_s$, and the smallest point of $X_s$ larger than $\frac{1}{2}(5-\sqrt{5})$ is $\alpha_3$.
\end{corollary}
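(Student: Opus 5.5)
The plan is to read everything off Theorem~\ref{thm:dimtwo}. Fix $c<\sqrt5$. By Theorem~\ref{thm:dimtwo}, every point of $X_s\cap[1,c]$ lies in the set $\{1,2,\alpha_3,\alpha_5\}\cup\{D_m:m\geq1\}$. So it suffices to show that only finitely many $D_m$ lie in $[1,c]$. Since $D_m=\sqrt5\,(1-\phi^{-2m})$ is strictly increasing and tends to $\sqrt5$, there is an $m_0$ with $D_m>c$ for all $m>m_0$. Explicitly, any $m$ with $\phi^{-2m}<1-c/\sqrt5$ works. Hence $X_s\cap[1,c]$ is contained in a finite set, which proves the first claim.

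For the consequences, note that $\sqrt2<c$ for some $c<\sqrt5$, say $c=2$. Finiteness of $X_s\cap[1,2]$ then means no point of $[1,2]$, and in particular not $\sqrt2$, is a limit point of $X_s$. For the gap statement, I will list the candidates from Theorem~\ref{thm:dimtwo} that lie below $2$, using the numerical values recorded there and in Table~\ref{tab:dimtwo}: they are $1<D_1\approx1.38197<\alpha_3\approx1.84117<D_2\approx1.90983<2$. The next candidate is $D_3\approx2.11146$, and the remaining $D_m$ are larger still by monotonicity. Since $\sqrt2\approx1.41421<\alpha_3$, no candidate lies in $(D_1,\sqrt2)$, and therefore $X_s$ does not meet this interval.

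Finally, among the candidates exceeding $D_1=\tfrac12(5-\sqrt5)$, the smallest is $\alpha_3$. By Proposition~\ref{prop:realize}, $\alpha_3$ is actually attained, as the global dimension of a Galois conjugate of $\mathcal{C}(\mathfrak{sl}_2,5)_{\mathrm{ad}}$. So it is the smallest point of $X_s$ above $D_1$.

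There is no real obstacle, since all the work is in Theorem~\ref{thm:dimtwo}. The only steps needing care are the monotonicity of $D_m$, which is immediate from the closed form, and ordering the candidates correctly with the numerical values.
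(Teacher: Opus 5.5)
Your proposal is correct and is essentially the paper's own proof. It uses Theorem~\ref{thm:dimtwo} for the containment in $\{1,2,\alpha_3,\alpha_5\}\cup\{D_m:D_m\leq c\}$, gets finiteness from $D_m$ increasing to $\sqrt5$, and reads off the gap statements from the ordering $1<D_1<\sqrt2<\alpha_3<D_2<\cdots$ together with $\alpha_3\in X_s$ from Proposition~\ref{prop:realize}. One small nit: finiteness of $X_s\cap[1,2]$ alone does not rule out $2$ itself being a limit point from above (for that, take $c=2.2$ instead), but this does not affect the claim about $\sqrt2$.
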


\begin{proof}
By Theorem~\ref{thm:dimtwo}, $X_s\cap[1,c]$ is contained in $\{1,2,\alpha_3,\alpha_5\}\cup\{D_m:D_m\leq c\}$, which is finite because $D_m$ increases to $\sqrt{5}$. The remaining statements follow from $1<D_1<\sqrt2<\alpha_3<D_2<\dots$ and $\alpha_3\in X_s$.
\end{proof}

\begin{note}
The large conjugate $\sqrt{5}\,(\phi^{2m}-1)$ of $D_m$ is $5F_m\phi^m$ for $m$ even and $\sqrt{5}\,L_m\phi^m$ for $m$ odd. For $m=1,2,3$ these are the global dimensions $\phi\sqrt{5}\approx3.618$ of the Fibonacci category, $5\phi^2\approx13.09$ of its square, and $4\sqrt{5}\,\phi^3\approx37.89$ of the even part of 2D2. \end{note}

\begin{corollary}\label{arbcor}
    If $\mathcal{C}\not\simeq\mathrm{Vec}$ is a fusion category, then $\mathrm{dim}(\mathcal{C})\geq\frac{1}{2}D_3=10-4\sqrt{5}$.
\end{corollary}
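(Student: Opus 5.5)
Pass from an arbitrary fusion category $\mathcal{C}$ to its sphericalization $\widetilde{\mathcal{C}}$ \cite[Remark 3.1]{MR2183279}, as indicated in the introduction. This is a spherical fusion category with $\dim(\widetilde{\mathcal{C}})=2\dim(\mathcal{C})$. Here $\dim(\mathcal{C})$ is the global dimension defined by squared norms, which makes sense without a pivotal structure. The plan is to apply Theorem~\ref{thm:dimtwo} to $\widetilde{\mathcal{C}}$ and show that the only values it allows below $D_3$ come from $\mathcal{C}\simeq\mathrm{Vec}$.

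First, recall that $\widetilde{\mathcal{C}}$ contains a copy of $\mathrm{Vec}_{\mathbb{Z}/2}$, namely the pivotal structures twisted by the sign. Also, $\widetilde{\mathcal{C}}\simeq\mathcal{C}\boxtimes\mathrm{Vec}_{\mathbb{Z}/2}$ as fusion categories, with a modified pivotal structure. Hence $\dim(\widetilde{\mathcal{C}})=2\dim(\mathcal{C})$. Suppose $\dim(\mathcal{C})<\tfrac12D_3$. Then $\dim(\widetilde{\mathcal{C}})<D_3<\sqrt5$, so Theorem~\ref{thm:dimtwo} gives
\[
\dim(\widetilde{\mathcal{C}})\in\{1,D_1,\alpha_3,D_2,2\}.
\]

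Next, rule out each value other than $2$. By \cite{MR2183279} we have $\dim(\mathcal{C})\ge1$, with equality only for $\mathrm{Vec}$. Since $\dim(\mathcal{C})=\tfrac12\dim(\widetilde{\mathcal{C}})$, the values $1,D_1,\alpha_3,D_2$ would give $\dim(\mathcal{C})<1$, which is impossible. So $\dim(\widetilde{\mathcal{C}})=2$, hence $\dim(\mathcal{C})=1$, hence $\mathcal{C}\simeq\mathrm{Vec}$. This proves the claimed bound $\dim(\mathcal{C})\ge\tfrac12D_3=10-4\sqrt5$.

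For this we need that $\dim(\mathcal{C})\ge1$ for every fusion category, with equality only for $\mathrm{Vec}$. This holds because $\dim(\mathcal{C})=\sum_X|X|^2$, and $|X|^2\ge1$ is an algebraic integer all of whose conjugates are positive. The conjugates are dimensions in Galois-conjugate categories, each at least $1$; alternatively one can apply \cite[Theorem 4.1.1]{ostrikremarks} to $\widetilde{\mathcal{C}}$. Then every nontrivial $\mathcal{C}$ has $\dim(\mathcal{C})\ge2$ when $\mathcal{C}$ has at least two simple objects, which directly excludes the small values.

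The main obstacle is the bookkeeping of the sphericalization. I will check that $\widetilde{\mathcal{C}}$ is genuinely spherical and that its spherical global dimension equals $2\dim(\mathcal{C})$ with the squared-norm dimension of $\mathcal{C}$. If the non-pivotal dimension is not obviously $\ge1$, I would fall back on Ostrik's isolation of $1$ applied to $\widetilde{\mathcal{C}}$ together with the lower bound $\dim(\mathcal{C})\ge\tfrac12\cdot D_1>\tfrac12$.
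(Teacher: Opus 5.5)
This is the paper's argument. You pass to the sphericalization, use $\dim(\widetilde{\mathcal{C}})=2\dim(\mathcal{C})$ from \cite[Remark 3.1]{MR2183279}, and invoke Theorem~\ref{thm:dimtwo} to see that no spherical global dimension lies in $(2,D_3)$. Your case analysis over $\{1,D_1,\alpha_3,D_2,2\}$ is just the contrapositive of the paper's remark that $\dim(\widetilde{\mathcal{C}})>2$. That main line is correct, given the fact you cite from \cite{MR2183279}: $\dim(\mathcal{C})\ge1$, with equality only for $\mathrm{Vec}$.

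The paragraph in which you try to justify that fact is wrong, however.
\begin{itemize}
\item It is false that $|X|^2\ge1$ for simple $X$. It is also false that $\dim(\mathcal{C})\ge2$ once $\mathcal{C}$ has two simple objects. In $\mathcal{YL}$ the nontrivial simple object has $|X|^2=\tfrac12(3-\sqrt5)\approx0.382$, and $\dim(\mathcal{YL})=D_1<2$.
\item The conjugates of $|X|^2$ are squared norms in Galois-conjugate categories, not global dimensions, so they need not be $\ge1$. Only their product is $\ge1$.
\item The correct one-line reason, which is what the paper uses, is $\dim(\mathcal{C})=1+\sum_{X\not\cong\mathbbm{1}}|X|^2$ with $|X|^2>0$ by \cite[Theorem 2.3]{MR2183279}. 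Add that a rank-one fusion category is $\mathrm{Vec}$.
\item Your fallback would not repair this. Ostrik's isolation of $1$ together with $\dim(\mathcal{C})\ge\tfrac12D_1$ does not exclude $\dim(\widetilde{\mathcal{C}})\in\{D_1,\alpha_3,D_2\}$.
\end{itemize}

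Also drop the claim that $\widetilde{\mathcal{C}}\simeq\mathcal{C}\boxtimes\mathrm{Vec}_{\mathbb{Z}/2}$ as fusion categories. $\widetilde{\mathcal{C}}$ is pivotal, and a pivotal structure restricts to the full fusion subcategory $\mathcal{C}\boxtimes\mathbbm{1}$. So that equivalence would force $\mathcal{C}$ itself to be pivotal, which is not known for arbitrary fusion categories. You only need the dimension count in \cite[Remark 3.1]{MR2183279}.
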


\begin{proof}
Recall the sphericalization $\tilde{\mathcal{C}}$ of a fusion category $\mathcal{C}$ \cite[Remark 3.1]{MR2183279}. Here $\mathrm{dim}(\mathcal{C})=\sum_X|X|^2$ is the global dimension of \cite{MR2183279}. If $\mathcal{C}\not\simeq\mathrm{Vec}$ then $\mathrm{dim}(\mathcal{C})>1$, since $|X|^2>0$ for every simple $X$ \cite[Theorem 2.3]{MR2183279}. The sphericalization $\tilde{\mathcal{C}}$ is a spherical fusion category with $\mathrm{dim}(\tilde{\mathcal{C}})=2\mathrm{dim}(\mathcal{C})>2$ \cite[Remark 3.1]{MR2183279}. By Theorem~\ref{thm:dimtwo}, no spherical fusion category has global dimension in $(2,D_3)$, so $2\mathrm{dim}(\mathcal{C})\geq D_3=20-8\sqrt{5}$.
\end{proof}

\bibliographystyle{plainurl}
\bibliography{bib}
\end{document}